\newcommand{\proofka}{{\noindent \it Proof. }}
\newtheorem{thm}{Theorem}
\newtheorem{lemma}[thm]{Lemma}
\newtheorem{cor}[thm]{Corollary}
\newtheorem{prop}[thm]{Proposition}
\newtheorem{defi}{Definition}
\newtheorem{remark}{Remark}
\newtheorem{ques}{Question}
\newtheorem*{cl*}{Claim}
\def\beq{\begin{equation}}\def\eeq{\end{equation}}
\def\beqn{\begin{eqnarray}}\def\eeqn{\end{eqnarray}}
\def\qed{\ifhmode\unskip\nobreak\fi\quad\ifmmode\Box\else$\Box$\fi}
\newcommand{\KG}{{\rm KG}}
\newcommand{\SG}{{\rm SG}}
\title{Alternating odd cycles and orientations of Kneser-like graphs}
\author{\hfill N\'ora Alm\'asi\thanks
{Department of Computer Science and Information Theory,
Faculty of Electrical Engineering and Informatics,
Budapest University of Technology and Economics
{\tt almasi.nora@cs.bme.hu}}
\and G\'abor Simonyi\thanks{HUN-REN Alfr\'ed R\'enyi Institute of Mathematics, Budapest, Hungary and
Department of Computer Science and Information Theory,
Faculty of Electrical Engineering and Informatics,
Budapest University of Technology and Economics. Research partially supported by the National Research, Development and Innovation Office (NKFIH) grant K--132696 of NKFIH Hungary; {\tt simonyi@renyi.hu}
}
}
\date{}
\begin{document}

\maketitle

\begin{abstract}

We call an oriented odd cycle alternating if it has exactly one vertex whose in-degree and out-degree are both positive. 
In this paper, we investigate whether certain graphs admit an orientation that avoids alternating odd cycles as subgraphs, or one in which all their shortest odd cycles become alternating. 
Our focus is on topologically $\chi$-chromatic graphs, that is, graphs for which the topological method yields a sharp lower bound on the chromatic number. 
We present results for several graph families, including Kneser graphs, Schrijver graphs, and generalized Mycielski graphs.


\end{abstract}

\section{Introduction}

An oriented cycle is called alternating if each of its vertices, except at most one, is either a source or a sink.
This means for a cycle of length $\ell$ that $\lfloor\ell/2\rfloor$ of the vertices have outdegree $0$ and also $\lfloor\ell/2\rfloor$ of the vertices have indegree $0$.

Alternating odd cycles play a similar role with respect to the directed local chromatic number (for its definition see Subsection~\ref{subsect:diloc}) as odd cycles do with respect to the chromatic number (or the local chromatic number) of undirected graphs: it depends on their presence, whether the value of the directed local chromatic number is just $2$ or larger. 
We mention that alternating odd cycles also came up in different contexts, see \cite{CPR,GH}.

In this paper, we are interested in the (non-)appearance of alternating odd cycles in orientations of some interesting graphs, in particular, those for which the topological method introduced by Lovász in \cite{LLKn} gives a sharp lower bound for the chromatic number.

It is easy to show (and is already done in \cite{STdir}) that $3$-colorable graphs can be oriented so that no alternating odd cycles appear. 
Using some known results proven with the help of topological tools about graphs mentioned in the previous paragraph, one finds that if such a graph has chromatic number at least $5$, then alternating odd cycles are unavoidable in its orientations. 
(This result is, in fact, already implicit in \cite{STdir}.) 
So our main focus will be on $4$-chromatic graphs. 
We will show that every $4$-chromatic Schrijver graph (see Subsection~\ref{subsect:toptchrom} for the definition) can be oriented so that it contains no alternating odd cycles (except $K_4$, which is also a Kneser graph). 
We achieve this by proving an equivalent statement: that the directed local chromatic number of the resulting oriented graph is $2$. 
Moreover, we show we can achieve this orientation using only $4$ colors. 
We establish similar results for Mycielski graphs as well.
Conversely, we also show that $4$-chromatic Kneser graphs have no such orientation that admits a $4$-coloring attaining directed local chromatic number $2$. 
We will see that this is equivalent to saying that $4$-chromatic Kneser graphs do not admit a homomorphism into the so-called symmetric shift graph $S_4$ (for its definition, see Section~\ref{sect:avoid}).
In contrast, all $4$-chromatic Schrijver graphs that are not also Kneser graphs (that is, all except $K_4$) do admit such a homomorphism.

In Section~\ref{sect:altall} we investigate the question of when a graph can be oriented so that all the shortest odd cycles in it become alternating. 
We will show, for example, that this is always possible for certain Kneser graphs.  


\section{Avoiding alternating odd cycles}\label{sect:avoid}

\subsection{Directed local chromatic number}\label{subsect:diloc}

The local chromatic number $\psi(G)$ of a graph $G$ was introduced in \cite{EFHKRS}. 
It can be defined as follows. For a proper coloring $c$ and a subset $U$ of the vertex set $V(G)$ of graph $G$, let $c(U):=\{c(u): u\in U\}$ denote the set of colors appearing on vertices belonging to $U$.
The (open) neighborhood of a vertex $v$ in a graph $G$ is the set of vertices adjacent to $v$, while the closed neighborhood includes $v$ itself along with its neighbors.
We denote the closed neighborhood of a vertex $v$ in $G$ by $N_G[v]:=\{v\}\cup \{u\in V(G): \{v,u\}\in E(G)\}$, or when the graph $G$ is understood from the context, we may write simply $N[v]$.

\begin{defi}\label{defi:psi}{\rm (\cite{EFHKRS})}
The local chromatic number of a graph $G$ is defined as
$$\psi(G)=\min_c\max\{|c(N[v])|: v\in V(G)\},$$
where $c$ runs over all proper colorings of $G$.
\end{defi}

While it is obvious from the definition that $\psi(G)$ cannot exceed the chromatic number $\chi(G)$ of the graph, somewhat surprisingly, it can be just $3$ for graphs of arbitrarily large chromatic number.

\begin{thm}\label{thm:locgap} {\rm (\cite{EFHKRS})}
  For every positive integer $m\ge 3$ there exists graph $G$ with $\psi(G)=3$ and $\chi(G)\ge m$.
\end{thm}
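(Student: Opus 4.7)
The theorem asserts, for each $m\ge 3$, the existence of a graph $G$ with $\psi(G)=3$ yet $\chi(G)\ge m$. The tension is that $\psi(G)\le 3$ is a local condition -- every closed neighborhood uses at most three colors under some proper coloring -- while $\chi(G)\ge m$ is a global one. My plan is to construct, for each $m$, a \emph{locally bipartite} graph (one in which every open neighborhood induces a bipartite subgraph; in fact one may arrange triangle-freeness, in which case neighborhoods are edgeless) of large chromatic number, and then to exhibit a proper coloring of it that uses at most two colors on each open neighborhood.

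For the first ingredient, triangle-free graphs of arbitrarily large chromatic number are classical: one may take iterated Mycielski-type constructions starting from an odd cycle, or shift graphs, or Kneser/Schrijver graphs with suitable parameters. Any such graph is automatically locally bipartite (indeed locally edgeless), so every open neighborhood is a priori $2$-colorable, and the large chromatic number is guaranteed by standard (combinatorial or topological) methods. Thus the ``global'' side of the statement is essentially free; the substance of the theorem lies in the local coloring.

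The hard step is to arrange a \emph{proper} coloring of $G$ that simultaneously uses at most two colors on every open neighborhood. Independent local $2$-colorings of overlapping neighborhoods need not agree, so the main obstacle is global consistency: one must pick the palettes so that the induced colorings match on intersections and assemble into a proper coloring of the whole graph. This obstacle is handled either by engineering a specific graph whose symmetry admits a canonical palette assignment that lifts through an inductive construction, or by a probabilistic argument in which each vertex's local palette is chosen at random and one shows that with positive probability the resulting assignment is both proper and locally two-colored. Once such a coloring is in hand, each closed neighborhood uses at most $1+2=3$ colors, giving $\psi(G)\le 3$; combined with $\chi(G)\ge m\ge 3$, which forces $\psi(G)\ge 3$ (since only bipartite graphs have $\psi=2$), this yields $\psi(G)=3$ and proves the theorem.
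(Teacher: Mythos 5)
The paper cites Theorem~\ref{thm:locgap} from \cite{EFHKRS} without reproducing a proof, so there is no internal argument to compare against; judged on its own terms, your write-up is an outline rather than a proof. The decisive step---producing a proper coloring in which every open neighborhood sees at most two colors---is the entire content of the theorem. You correctly identify it as ``the hard step,'' but then dispose of it with two unspecified placeholders (``engineering a specific graph whose symmetry admits a canonical palette assignment'' or ``a probabilistic argument'' with unstated probability space and no union bound or local-lemma calculation). No graph is actually constructed, no coloring exhibited, nothing verified. The \cite{EFHKRS} proof, by contrast, builds a concrete graph (a universal graph for local $3$-colorability, defined on pairs $(i,A)$ with a coloring by the first coordinate that makes $\psi\le 3$ immediate) and then devotes its real effort to a nontrivial combinatorial lower bound on the chromatic number of that specific graph.

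There is also a concrete error in the examples you propose. You offer Kneser/Schrijver graphs and iterated Mycielskians as candidate triangle-free graphs of large chromatic number, but these are exactly the wrong choices: they are topologically $t$-chromatic with $t=\chi$, and the Zig-zag theorem quoted in this paper (\cite{ST1,KF}) forces any proper coloring of such a graph to contain a completely multicolored $K_{\lfloor t/2\rfloor,\lceil t/2\rceil}$, whence $\psi\ge\lceil t/2\rceil+1$. Thus for these families $\psi$ grows unboundedly with $\chi$, and in particular $\psi>3$ already for $\chi\ge 5$. Being triangle-free (so that open neighborhoods are trivially bipartite) does not help at all: the issue is never whether each neighborhood is $2$-colorable in isolation, but whether a single global proper coloring can be locally $2$-palette everywhere, and that is precisely what the topological obstruction rules out for the graphs you name. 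The theorem asserts that $\psi$ and $\chi$ can be made to diverge; it therefore requires a graph engineered to escape such obstructions, not a generic high-chromatic triangle-free graph.
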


Note that we cannot write $2$ in place of $3$ in the above theorem: it is easy to see that $\psi(G)=2$ implies that $G$ is bipartite.

For a digraph $D$ let $N_+[v]$ denote the closed outneighborhood of vertex $v$, i.e., $N_+[v]:=\{u\in V(G): (v,u)\in E(D)\}\cup\{v\}$. (Note that $(v,u)$ refers to an ordered pair of vertices, that is, an edge oriented from $v$ towards $u$.)
It is straightforward to generalize Definition~\ref{defi:psi} to directed graphs as it is done in \cite{KPS} by considering outneighborhoods in place of neighborhoods.

\begin{defi}\label{defi:dpsi}{\rm (\cite{KPS})}
  The directed local chromatic number of a digraph $D$ is defined as
  $$\psi_d(D)=\min_c\max\{|c(N_+[v])|: v\in V(D)\},$$
  where $c$ runs over all proper colorings of (the underlying undirected graph of) $D$.
\end{defi}

The analog of Theorem~\ref{thm:locgap} is the following.

\begin{thm}\label{thm:dilocgap}
  For every positive integer $m\ge 2$ there exists an oriented graph $\overrightarrow{D}$ with $\psi_d(\overrightarrow{D})=2$ and $\chi(D)\ge m$, where $D$ is the underlying undirected graph of $\overrightarrow{D}$.
\end{thm}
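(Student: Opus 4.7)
The plan is to take a suitable orientation of the shift graph. Fix $n$ (to be chosen) and let $\vec{S}_n$ be the directed shift graph with vertex set $\{(i,j):1\le i<j\le n\}$ and a directed edge from $(i,j)$ to $(j,k)$ whenever $i<j<k$. Its underlying undirected graph is the classical shift graph $S_n$, whose chromatic number equals $\lceil\log_2 n\rceil$ by the well-known theorem of Erd\H os--Hajnal. So choosing $n\ge 2^{m-1}+1$ (or simply $n=2^m$) already secures the chromatic-number requirement $\chi(S_n)\ge m$.

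The remaining step is to exhibit a proper coloring $c$ of $S_n$ such that every closed out-neighborhood in $\vec{S}_n$ uses at most two colors. The coloring I would use is the simplest possible one: set $c(i,j):=i$. Every edge of the underlying graph has the form $\{(i,j),(j,k)\}$ with $i<j<k$, so its endpoints get the distinct colors $i$ and $j$; thus $c$ is proper. For each vertex $(i,j)$, the closed out-neighborhood in $\vec{S}_n$ is
$$N_+[(i,j)] = \{(i,j)\}\cup\{(j,k):j<k\le n\},$$
which $c$ colors with $i$ on $(i,j)$ and with $j$ on every out-neighbor; hence $|c(N_+[(i,j)])|\le 2$. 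Consequently $\psi_d(\vec{S}_n)\le 2$, and since $\vec{S}_n$ contains edges for $n\ge 3$, the value cannot drop below $2$, so $\psi_d(\vec{S}_n)=2$.

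There is no serious obstacle to overcome: the properness of $c$ and the bound on $|c(N_+[v])|$ are immediate from the definitions, and the only external input is the classical identity $\chi(S_n)=\lceil\log_2 n\rceil$, which is simply cited. The one sanity check worth pointing out is that $\psi_d$ requires $c$ to be proper on the \emph{underlying undirected} graph rather than only on the directed edges; but since every edge of $S_n$ arises as the directed edge $(i,j)\to(j,k)$ for some $i<j<k$, the two notions coincide here and no additional argument is needed.
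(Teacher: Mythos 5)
Your proof is correct. The paper does not spell out its own argument — it states that the theorem ``is analogous to that of Theorem~\ref{thm:locgap} and also easily follows from Theorem 7 and Proposition 8 in \cite{STdir}'' — but the intended construction is exactly the one you give: orient a shift graph so that all out-neighbors of $(i,j)$ are of the form $(j,k)$, color by the first coordinate, and invoke the Erd\H{o}s--Hajnal formula $\chi=\lceil\log_2 n\rceil$ for the chromatic number. (You use the classical shift graph with $i<j$; the paper works more often with the symmetric shift graph $S_m$ and its directed version $\overrightarrow{S}_m$, of which your digraph is an induced subdigraph, so this is a cosmetic rather than substantive difference.) Your sanity check about properness on the underlying undirected graph is also the right thing to note, since $\psi_d$ is defined via proper colorings of the underlying graph.
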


The proof of Theorem~\ref{thm:dilocgap} is analogous to that of Theorem~\ref{thm:locgap} and also easily follows from Theorem 7 and Proposition 8 in \cite{STdir}.

\begin{defi}
  The symmetric shift graph $S_m$ is defined on the vertex set
  $$V(S_m)=\{(i,j): 1\le i,j\le m, i\neq j\}$$
  having the edge set
  $$E(S_m)=\{\{(i,j)(k,\ell)\}: j=k\ {\rm or}\ i=\ell\}.$$
\end{defi}

The classical shift graph was originally defined by Erdős and Hajnal \cite{EH}. 
It is the subgraph of the graphs $S_m$ induced by the vertices $(i,j)$ satisfying $i<j$.

A homomorphism from a (di)graph $F$ to a (di)graph $G$ is an edge-preserving map from $V(F)$ to $V(G)$. 
We will denote the existence of such a homomorphism by $F\rightarrow G$.
When no such homomorphism exists, we write $F\nrightarrow G$.

The graphs $S_m$ have a natural directed version $\overrightarrow{S}_m$ in which edge $\{(i,j),(j,k)\}$ for $i\neq k$ is oriented from $(i,j)$ to $(j,k)$ while edges $\{(i,j),(j,i)\}$ are substituted by a pair of oppositely directed edges. 
It is noted already in \cite{STdir} (cf. the paragraph after the proof of Proposition 8) that a digraph $F$ admits a coloring with $m$ colors realizing $\psi_d(F)\le 2$ if and only if it admits a homomorphism into $\overrightarrow{S}_m$ and thus $\overrightarrow{S}_m$ is simply the homomorphism universal graph denoted $U_d(m,2)$ in \cite{KPS}. 
From these facts it is just a little step forward to see the following statement that we give for further reference.

\begin{prop}\label{prop:shiftuni}
An undirected graph $G$ admits an orientation $\overrightarrow{G}$ satisfying $\psi_d(\overrightarrow{G})\le 2$ that can be attained using $m$ colors if and only if $G\rightarrow S_m$.
\end{prop}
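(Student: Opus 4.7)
The plan is to bootstrap the statement from the already-noted universality of $\overrightarrow{S}_m$ (a digraph $F$ has an $m$-coloring realizing $\psi_d(F)\le 2$ iff $F\to\overrightarrow{S}_m$), using the fact that the underlying undirected graph of $\overrightarrow{S}_m$ is $S_m$. So the work reduces to converting an undirected homomorphism $G\to S_m$ into a directed one from some orientation $\overrightarrow{G}$ to $\overrightarrow{S}_m$, and conversely.

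For the \emph{if} direction, I would start from a homomorphism $h\colon G\to S_m$ and orient each edge $\{u,v\}\in E(G)$ so that $h$ becomes a digraph homomorphism into $\overrightarrow{S}_m$. Writing $h(u)=(i,j)$ and $h(v)=(k,\ell)$, the $S_m$-edge condition gives $j=k$ or $i=\ell$: if $j=k$ and $i\ne \ell$ orient $u\to v$; if $i=\ell$ and $j\ne k$ orient $v\to u$; in the ``two-way'' case $h(v)=(j,i)$ either orientation works. The coloring $c(v):=$ first coordinate of $h(v)$ is then proper (in each case the $S_m$-edge condition forces the first coordinates to differ because every pair has distinct coordinates) and satisfies $|c(N_+[v])|\le 2$, since every out-neighbor of a vertex with image $(i,j)$ has image with first coordinate $j$.

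For the \emph{only if} direction, I would start from an orientation $\overrightarrow{G}$ with a proper $m$-coloring $c$ achieving $\psi_d\le 2$. Since $c$ is proper and $|c(N_+[v])|\le 2$ for every $v$, whenever $v$ has at least one out-neighbor, all its out-neighbors share one common color $j_v\ne c(v)$. For vertices with no out-neighbor, pick any $j_v\in\{1,\dots,m\}\setminus\{c(v)\}$. Set $h(v):=(c(v),j_v)$. For any edge of $G$ directed $u\to v$ in $\overrightarrow{G}$ we have $j_u=c(v)$, so the second coordinate of $h(u)$ equals the first coordinate of $h(v)$, which is exactly the edge condition $j=k$ in $S_m$; thus $h$ is a homomorphism $G\to S_m$.

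The main (but very mild) obstacles are purely bookkeeping: the ``two-way'' edges of $\overrightarrow{S}_m$ require an arbitrary orientation choice in the first direction, and vertices with empty out-neighborhood in the second direction likewise require an arbitrary $j_v$. Neither is substantive, and the proposition is essentially the digraph universality result repackaged after stripping orientations.
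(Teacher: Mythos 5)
Your proof is correct and follows essentially the same route as the paper: both directions are handled by transferring the problem to the directed shift graph $\overrightarrow{S}_m$, pulling an orientation back along the homomorphism $G\to S_m$ in one direction and forgetting the orientation in the other. The only cosmetic difference is that the paper invokes the universality of $\overrightarrow{S}_m$ (from \cite{STdir,KPS}) as a black box, whereas you unfold its proof explicitly (first-coordinate coloring, out-neighbors sharing a color), which makes the argument more self-contained but does not change the underlying idea.
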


\proofka
It follows from the foregoing that if $G$ admits such an orientation $\overrightarrow{G}$, then $\overrightarrow{G}\rightarrow\overrightarrow{S_m}$, and then clearly the same mapping between the vertex sets witnesses $G\rightarrow S_m$.

On the other hand, if $G\rightarrow S_m$ holds then considering the same map $\phi: V(G) \to V(S_m)$ as a map to $V(\overrightarrow{S_m})$ and ``pulling back'' the orientation of the edges of $\overrightarrow{S_m}$
to their preimages (i.e., orienting $\{a,b\}\in E(G)$ as $(a,b)$ if the edge $\{\phi(a),\phi(b)\}$ is oriented as $(\phi(a),\phi(b))$, or arbitrarily if both $(\phi(a),\phi(b))$ and $(\phi(b),\phi(a))$ are edges of $\overrightarrow{S_m}$) we obtain an oriented version $\overrightarrow{G}$ of $G$ that admits a homomorphism to $\overrightarrow{S_m}$ and thus has $\psi_d(\overrightarrow{G})\le 2$ attainable with $m$ colors.
\hfill$\Box$

As we already mentioned in the introduction, a digraph $\overrightarrow{G}$ has $\psi_d(\overrightarrow{G})\le 2$ if and only if it does not contain an alternating odd cycle. This is proven in \cite{STdir} (see Proposition 14 and its proof in \cite{STdir}). This together with Proposition~\ref{prop:shiftuni} means that we can orient a graph avoiding alternating odd cycles if and only if it admits a homomorphism into the shift graph $S_m$ for some $m$.

Following \cite{STdir}, for an undirected graph $G$ we will use the notation $\psi_{d,{\rm min}}(G)$ meaning the minimum possible value of $\psi_d(\overrightarrow{G})$ over all orientations of $G$.
\medskip

The following observation is immediate and already appears in \cite{STdir} (in a more general form). Still, we give the proof for completeness.

\begin{prop}\label{prop:3chrom} {\rm (\cite{STdir})}
If $G$ is a graph containing at least one edge and having chromatic number at most $3$, then $\psi_{d,{\rm min}}(G)=2.$ In particular, it admits an orientation avoiding alternating odd cycles.
\end{prop}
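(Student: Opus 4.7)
By Proposition~\ref{prop:shiftuni} it suffices to exhibit a homomorphism $G \to S_m$ for some $m$; combined with the equivalence between $\psi_d(\overrightarrow{G}) \le 2$ and the absence of alternating odd cycles (noted just before Proposition~\ref{prop:shiftuni}), this will also yield the ``in particular'' conclusion. I would take $m = 3$, which is natural since $\chi(G) \le 3$.

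Given a proper $3$-coloring $c: V(G) \to \{1,2,3\}$, my map would be
$$\phi(v) = (c(v),\; c(v)+1),$$
where the second coordinate is taken modulo $3$ with representatives in $\{1,2,3\}$. To check that $\phi$ is a graph homomorphism $G \to S_3$, I would do a short case analysis on the colors of the endpoints of an arbitrary edge $\{u,v\}$. When $\{c(u), c(v)\} = \{1,2\}$ or $\{2,3\}$, the second coordinate of one of $\phi(u), \phi(v)$ coincides with the first coordinate of the other, placing $\{\phi(u),\phi(v)\}$ in $E(S_3)$. When $\{c(u), c(v)\} = \{1, 3\}$, say $c(u)=1$ and $c(v)=3$, one gets $\phi(u) = (1,2)$ and $\phi(v) = (3,1)$, and now the alternative condition $i = \ell$ in the definition of $E(S_m)$ is satisfied (both equal $1$).

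With $\phi$ in hand, Proposition~\ref{prop:shiftuni} produces an orientation $\overrightarrow{G}$ of $G$ with $\psi_d(\overrightarrow{G}) \le 2$ attainable using $3$ colors, so $\psi_{d,{\rm min}}(G) \le 2$. The reverse inequality $\psi_{d,{\rm min}}(G) \ge 2$ is automatic: $G$ has an edge $\{u,v\}$, and in any orientation, writing $u$ for the tail, the vertices $u,v \in N_+[u]$ receive distinct colors in any proper coloring, so $\psi_d \ge 2$.

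The hard part, insofar as there is one, is finding the right $\phi$; everything else is routine verification. The cyclic choice above essentially encodes the ``orientation pattern'' $1 \to 2 \to 3 \to 1$ of the three color classes. One could alternatively orient the edges of $G$ directly by this cyclic rule and argue about alternating odd cycles in place, but factoring through $S_3$ keeps the proof short and makes the accompanying ``attainable with $3$ colors'' statement transparent.
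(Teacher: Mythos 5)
Your proof is correct and is essentially the same argument as the paper's: the paper orients edges by the cyclic rule red$\to$blue$\to$green$\to$red and observes directly that outneighborhoods stay monochromatic, and then explicitly remarks that this is the same as pulling back the orientation of a cyclically oriented $K_3$ along a homomorphism $G\to K_3$. Your map $v\mapsto(c(v),c(v)+1)$ into $S_3$ is exactly the embedding of that cyclic $K_3$ into $\overrightarrow{S}_3$, so the two presentations coincide.
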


\proofka
Consider a proper coloring of $G$ by at most $3$ colors, say red, blue and green. 
Every edge connects two differently colored vertices. 
If those two colors are red and blue, orient the edge towards the blue one, for blue and green towards the green one, and for red and green towards the red one. 
Then, keeping the same coloring, every vertex with outdegree at least $1$ has exactly $2$ colors in its closed outneighborhood. 
This implies that there is no alternating odd cycle and, since the graph is not edgeless, that $\psi_{d,{\rm min}}(G)=2$.
\hfill$\Box$

Note that the proof above is equivalent to taking a homomorphism from $G$ to $K_3$ that exists by $3$-colorability, then considering the target graph $K_3$ a cyclically oriented triangle and pulling back the orientation of its edges to their preimages.

\subsection{Topological $t$-chromaticity}\label{subsect:toptchrom}

The topological method giving lower bounds on the chromatic number of graphs was introduced by Lovász in his pioneering paper \cite{LLKn}. 
In that work, he proved Kneser's conjecture concerning the chromatic number of a family of graphs that later came to be known as Kneser graphs.

\begin{defi}\label{defi:KG}
The Kneser graph $\KG(n,k)$ with parameters $n\ge 2k$ is defined on the vertex set ${[n]\choose k}$ consisting of all $k$-element subsets of the $n$-element set $[n]=\{1,\dots,n\}$ with edge set
$$E(\KG(n,k))=\{\{A,B\}: A,B\in {[n]\choose k}, A\cap B=\emptyset\}.$$
\end{defi}

Lovász proved that $\chi(\KG(n,k))=n-2k+2$, confirming a conjecture of Kneser, who had earlier established this value as an upper bound \cite{Kne}.
Lovász's proof used the Borsuk-Ulam theorem from algebraic topology. 
The method he introduced has different implementations since then, for more about this we refer to the paper \cite{MZ}, the recent nice survey \cite{DM}, and to Matou\v{s}ek's excellent book \cite{Matbook}. 
In \cite{ST1}, graphs for which one particular implementation of the topological method gave a lower bound $t$ for the chromatic number were called topologically $t$-chromatic.
For the understanding of this paper we do not need the precise definition of the topological notions involved, it is enough to know the result called Zig-zag theorem in \cite{ST1} (cf. also \cite{KF} where it is already proven for Kneser graphs) which states that in any proper coloring of a topologically $t$-chromatic graph a completely multicolored complete bipartite graph $K_{\lfloor{t/2}\rfloor,\lceil{t/2}\rceil}$ must occur irrespective of the number of colors used. (The Zig-zag theorem actually states more, giving some restrictions on which colors should be on the two sides of this bipartite graph once the colors are ordered, but this is not needed for us at this point.) This easily implies the following result proven in \cite{STdir}.

\begin{thm}\label{thm:tnegyed} {\rm (see as Theorem 20 in \cite{STdir})}
If $G$ is a topologically $t$-chromatic graph with $t\ge 2$, then
$$\psi_{d,{\rm min}}\ge \lceil{t/4}\rceil +1.$$
\end{thm}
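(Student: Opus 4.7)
The plan is to combine the Zig-zag theorem with a short double-counting argument on a completely multicoloured complete bipartite subgraph. Fix an arbitrary orientation $\overrightarrow{G}$ of $G$ together with a proper colouring $c$ attaining $\delta := \psi_d(\overrightarrow{G})$. The Zig-zag theorem, quoted in the excerpt, guarantees the existence of a copy $K$ of $K_{\lfloor t/2 \rfloor,\lceil t/2 \rceil}$ in $G$ on which $c$ uses $t$ pairwise distinct colours. Let $A,B$ denote the two colour classes of $K$, with $|A|=a=\lfloor t/2\rfloor$ and $|B|=b=\lceil t/2\rceil$.

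For each vertex $v$ of $K$, the vertex $v$ itself and every out-neighbour of $v$ inside $K$ receive pairwise distinct colours under $c$, simply because all $t$ vertices of $K$ carry distinct colours. Consequently
$$|c(N_+[v])| \ \ge\ 1 + d^+_K(v),$$
where $d^+_K(v)$ denotes the out-degree of $v$ restricted to the edges of $K$. Since the left-hand side is at most $\delta$, this gives $d^+_K(v) \le \delta-1$ for every $v\in A\cup B$. Summing over all $a+b=t$ vertices of $K$, and noting that each of the $ab$ edges of $K$ is oriented and therefore contributes exactly once to the total of out-degrees, we obtain
$$\lfloor t/2 \rfloor \lceil t/2 \rceil \ = \ ab \ \le \ t(\delta-1).$$

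Rearranging yields $\delta \ge 1+\lfloor t/2\rfloor\lceil t/2\rceil/t$. Since $\delta$ is an integer, it suffices to check that this rational lower bound always exceeds $\lceil t/4\rceil$ for $t\ge 2$; this is a short case analysis on $t\bmod 4$ (using $\lfloor t/2\rfloor\lceil t/2\rceil=\lfloor t^2/4\rfloor$), and in each case one gets $\delta\ge\lceil t/4\rceil+1$. As the argument holds for every orientation and every proper colouring, we conclude $\psi_{d,{\rm min}}(G)\ge\lceil t/4\rceil+1$. The only non-routine ingredient is the invocation of the Zig-zag theorem to extract the multicoloured $K_{\lfloor t/2\rfloor,\lceil t/2\rceil}$; once that structure is in hand, what remains is a one-line pigeonhole together with a small rounding verification, so no real obstacle arises.
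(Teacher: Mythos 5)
Your argument is correct and is precisely the derivation the paper gestures at when it says the Zig-zag theorem ``easily implies'' the bound: extract the completely multicoloured $K_{\lfloor t/2\rfloor,\lceil t/2\rceil}$, observe that $v$ together with its out-neighbours inside that copy carries $1+d^+_K(v)$ distinct colours so $d^+_K(v)\le\psi_d-1$, sum over the $t$ vertices to get $\lfloor t^2/4\rfloor\le t(\psi_d-1)$, and round. The paper gives no written proof (it refers to Theorem~20 of the cited source), but the counting you supply, including the check that $1+\lfloor t^2/4\rfloor/t>\lceil t/4\rceil$ in each residue class mod~4, is exactly the intended one-line pigeonhole.
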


In \cite{STcolful}, some main examples of topologically $t$-chromatic graphs with chromatic number equal to $t$ are presented, including the following three. 
The first has already been defined; the remaining two will be defined after listing all three.

\begin{itemize}
\item Kneser graphs $\KG(n,k)$ for $t=n-2k+2$;
\item Schrijver graphs $\SG(n,k)$ for $t=n-2k+2$;
\item Mycielski and generalized Mycielski graphs $M_{r_1,\dots,r_k}(K_2)$ for $t=k+2$;
\end{itemize}

Schrijver graphs $\SG(n,k)$ were defined by Schrijver \cite{Schr} as induced subgraphs of Kneser graphs $\KG(n,k)$ of the same parameters and he proved that $\SG(n,k)$ is a vertex-color-critical subgraph within $\KG(n,k)$ sharing its chromatic number.

\begin{defi}\label{defi:Sch}
The Schrijver graph $\SG(n,k)$ with parameters $n\ge 2k$ is the induced subgraph of $\KG(n,k)$ on the vertex set consisting of the so-called stable $k$-subsets:
$$V(\SG(n,k)= \left\{ A\in {[n]\choose k} \middle| \ \forall\ i\in [n-1]:\  \{i,i+1\}\nsubseteq A, \{1,n\}\nsubseteq A\right\}.$$
\end{defi}
\medskip

\begin{defi}\label{defi:genMyc}
The $r$-level generalized Mycielskian $M_r(G)$ of a graph $G$ is defined on the vertex set
$$V(M_r(G))=\{(v,i): v\in V(G), 0\le i\le r-1\}\cup\{z\}$$ with edges
$$E(M_r(G))=\{\{(u,i),(v,j)\}: \{u,v\}\in E(G)\ {\rm and}\ (|i-j|=1\ {\rm or}\ i=j=0)\}\cup$$
$$\{\{z,(v,r-1)\}: v\in V(G)\}.$$
\end{defi}

When $r=2$ we simply call $M(G)=M_2(G)$ the Mycielskian of $G$. $M_{r_1,\dots,r_k}(G)$ stands for the iterated Mycielskian $M_{r_k}(M_{r_{k-1}}(\dots M_{r_1}(G)))$. Following Stiebitz \cite{Stieb} it is shown in \cite{GyJS, Matbook} that if $G$ is topologically $t$-chromatic, then $M_r(G)$ is topologically $(t+1)$-chromatic for any positive integer $r$ and so
$M_{r_1,\dots,r_k}(G)$ is topologically $(t+k)$-chromatic for every sequence of positive integers $r_1,\dots,r_k$. By the term Mycielski graphs and generalized Mycielski graphs, we refer to the iterated Mycielskians and generalized Mycielskians of $G=K_2$. Since $K_2$ is topologically $2$-chromatic, generalized Mycielskians are topologically $t$-chromatic if $t-2$ iterations of the construction are performed. It is easy to see that $\chi(M_r(G))\le\chi(G)+1$, so the graphs mentioned in the previous sentence also have chromatic number $t$. Note that Definition~\ref{defi:genMyc} also makes sense for $r=1$, $M_1(G)$ is simply the graph obtained from $G$ by adding a new vertex adjacent to all vertices of $G$.
\medskip

We have already seen in Theorem~\ref{thm:tnegyed}, that for $t\ge 5$ all topologically $t$-chromatic graphs have $\psi_{d,{\rm min}}\ge 3$. Combining this with Proposition~\ref{prop:3chrom}, we can state the following Corollary.

\begin{cor}\label{cor:csaka4}
Let $G$ be a topologically $t$-chromatic graph with $\chi(G)=t$.
If $t\le 3$ then $G$ admits an orientation avoiding alternating odd cycles, while for $t\ge 5$ every orientation of such a $G$ contains an alternating odd cycle.
\end{cor}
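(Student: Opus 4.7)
The plan is to handle the two ranges of $t$ separately, in each case by invoking a result that has already been established in the excerpt, so essentially no new work is needed beyond assembling them.

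For the case $t\le 3$, I would first dispose of the degenerate subcases: if $t\le 1$ the graph has no edges and every orientation trivially avoids alternating odd cycles, and if $t=2$ the graph $G$ is bipartite with at least one edge, so orienting all edges from one color class to the other again leaves no odd cycle at all, let alone an alternating one. If $t=3$, then $\chi(G)=3$ and $G$ contains an edge, so Proposition~\ref{prop:3chrom} directly yields $\psi_{d,\min}(G)=2$, and in particular an orientation of $G$ with no alternating odd cycle.

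For the case $t\ge 5$, I would apply Theorem~\ref{thm:tnegyed}: since $G$ is topologically $t$-chromatic with $t\ge 5$, we get
\[
\psi_{d,\min}(G)\ge \lceil t/4\rceil+1\ge 2+1 = 3,
\]
so every orientation $\overrightarrow{G}$ of $G$ satisfies $\psi_d(\overrightarrow{G})\ge 3$. Then I would invoke the characterization stated right after Proposition~\ref{prop:shiftuni} (from \cite{STdir}, Proposition~14) that $\psi_d(\overrightarrow{G})\le 2$ if and only if $\overrightarrow{G}$ contains no alternating odd cycle. Contrapositively, $\psi_d(\overrightarrow{G})\ge 3$ forces an alternating odd cycle in $\overrightarrow{G}$, which is exactly the required conclusion.

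There is no real obstacle: the corollary is purely a bookkeeping consequence of Proposition~\ref{prop:3chrom} and Theorem~\ref{thm:tnegyed}, combined with the equivalence between $\psi_d(\overrightarrow{G})\le 2$ and the absence of alternating odd cycles. The only point worth stating explicitly is the trivial observation that $\lceil t/4\rceil\ge 2$ as soon as $t\ge 5$, which is why the gap at $t=4$ arises and motivates the focus of the rest of the paper on $4$-chromatic graphs.
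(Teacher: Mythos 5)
Your proof is correct and follows essentially the same route as the paper: Theorem~\ref{thm:tnegyed} gives $\psi_{d,\min}(G)\ge 3$ for $t\ge 5$, Proposition~\ref{prop:3chrom} handles $t\le 3$ (with the edgeless cases trivial), and the equivalence between $\psi_d\le 2$ and the absence of alternating odd cycles converts these bounds into the stated conclusions. Your explicit treatment of the degenerate subcases $t\le 2$ is slightly more verbose than necessary (Proposition~\ref{prop:3chrom} already covers any non-edgeless graph with $\chi\le 3$), but it is harmless and the argument is sound.
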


The only case not covered by Corollary~\ref{cor:csaka4} is when $t=4$. This is what we concentrate on in the next subsection.



\subsection{The exceptional case: Topologically $4$-chromatic graphs}

We first recall that \cite{STdir} presents a general result implying that certain 4-chromatic Schrijver graphs and generalized Mycielski graphs admit orientations with directed local chromatic number $2$; in other words, orientations that contain no alternating odd cycles.

\begin{defi}\label{defi:swide}
A vertex-coloring of a graph $G$ is called $s$-wide if the so-colored $G$ contains no walk of length $2s-1$ between two (not necessarily distinct) vertices of the same color.
\end{defi}

Note that a coloring being $1$-wide simply means that it is proper.
The $2$-wide $t$-colorability of $t$-chromatic graphs was first investigated in \cite{GyJS}. 
The $3$-wide colorings were shown to be relevant concerning the local chromatic number in \cite{ST1}, but $s$-wide coloring is also considered there in general. 
The concept was also independently introduced in the unpublished paper by Baum and Stiebitz \cite{BS}, see also Chapter 8 of the recent book by Stiebitz, Schweser and Toft \cite{SST} where wide colorings are called odd colorings.

Corollary 23 of \cite{STdir} states that if a non-edgeless graph admits a $2$-wide $4$-coloring, then it has $\psi_{d,{\rm min}}=2$, and the proof shows that this can be realized with the $2$-wide $4$-coloring itself. 
Results from \cite{ST1} imply that $4$-chromatic Schrijver graphs $\SG(2k+2,k)$ admit such $2$-wide $4$-colorings if $k$ is large enough. 
Here, however, we prove that every $4$-chromatic Schrijver graph except $K_4$ (which is also a Kneser graph) has $\psi_{d,{\rm min}}=2$, even those that are known not to admit a $2$-wide $4$-coloring.
Moreover, we show that this value can always be attained using $4$ colors.
In contrast, we also show that no orientation of $\KG(n,k)$ allows a proper $4$-coloring in which every vertex has a monochromatic outneighborhood. 
In line with Proposition~\ref{prop:shiftuni} we state this result in the following form.

\begin{thm}\label{thm:toS4}
  \par\noindent
  1. For every $k\ge 2$ we have $$\SG(2k+2,k)\rightarrow S_4.$$
  \par\noindent
  2. It holds for any positive integer $k$ that $$\KG(2k+2,k)\nrightarrow S_4.$$
\end{thm}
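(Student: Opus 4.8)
Part 1 asks us to build a homomorphism $\SG(2k+2,k)\to S_4$; equivalently, by Proposition~\ref{prop:shiftuni}, to find an orientation of $\SG(2k+2,k)$ with $\psi_d=2$ attained with $4$ colors, i.e.\ a proper $4$-coloring together with an orientation in which every out-neighborhood is monochromatic. The natural starting point is the structure of the stable $k$-subsets of $[2k+2]$ arranged cyclically: a stable set of size $k$ in a $(2k+2)$-cycle leaves exactly $k+2$ "gaps", and a disjoint pair $A,B$ of such sets must interleave in a very restricted way — essentially $A\cup B$ covers all but two of the cyclic positions, and those two uncovered positions are either adjacent or almost adjacent. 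This rigidity is what makes a $4$-coloring possible, and it is the same rigidity exploited in \cite{ST1} to get $2$-wide colorings for large $k$; the point here is that we do not need the coloring to be $2$-wide, only to be compatible with a shift-graph homomorphism, and the latter is a strictly weaker demand. So the plan for Part 1 is: (i) describe $V(\SG(2k+2,k))$ via the cyclic gap structure; (ii) to each stable set $A$ assign a pair $(i(A),j(A))\in V(S_4)$ — the first coordinate recording something like the "type" of the position of the two missing elements, the second recording a parity or a secondary type — chosen so that whenever $A\cap B=\emptyset$ the defining condition $j(A)=i(B)$ or $i(A)=j(B)$ of $E(S_4)$ holds; (iii) verify edge-preservation by going through the (few) possible configurations of an interleaving disjoint pair. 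I expect step (ii) — pinning down the right invariant with values in a $4$-element label set — to be the crux; the verification in (iii) should then be a short finite case check, and the small cases $k=2,3$ may need to be handled by an explicit map.

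Part 2 asks for the impossibility $\KG(2k+2,k)\nrightarrow S_4$. Suppose for contradiction that $\phi:\KG(2k+2,k)\to S_4$ is a homomorphism. The key tool is that $\KG(2k+2,k)$ is topologically $4$-chromatic, so by the Zig-zag theorem every proper coloring of it contains a completely multicolored $K_{2,2}$; in particular the $4$-coloring of $V(\KG(2k+2,k))$ induced by composing $\phi$ with any proper coloring of $S_4$ (note $\chi(S_4)=4$, or at least $S_4$ is properly colorable, which is all we need) does. Pull this back: there are four pairwise "appropriately adjacent" vertices $A_1,A_2$ on one side and $B_1,B_2$ on the other with $A_i\cap B_j=\emptyset$ for all $i,j$, whose images $\phi(A_1),\phi(A_2),\phi(B_1),\phi(B_2)\in V(S_4)$ receive four distinct colors — hence are four distinct vertices of $S_4$ — and form a $K_{2,2}$ in $S_4$. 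One then argues that $S_4$ contains no multicolored $K_{2,2}$ of this kind: analyzing which $2\times2$ bipartite subgraphs occur in $S_4$ and what the adjacency rule $j=k$ or $i=\ell$ forces on the four labels, one finds that a $K_{2,2}$ in $S_4$ cannot have all four vertices distinct while remaining "independent-looking" in the way the pulled-back structure demands — or more precisely, that the four $S_4$-vertices arising this way must collapse or fail an edge. The main obstacle here is making the $S_4$ side of the argument precise: one must carefully use the full strength of the Zig-zag theorem (the ordering restriction on which colors sit on which side of the $K_{2,2}$, which the excerpt explicitly flags as available), because a bare multicolored $K_{2,2}$ alone may not be enough — the contradiction should come from combining the color-placement constraint with the rigid combinatorics of edges in $S_4$. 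I would expect to need a small lemma of the form: "$S_4$ admits no proper coloring $c$ and no $K_{2,2}$ subgraph on which $c$ takes four values arranged as in the Zig-zag conclusion," and the bulk of the work is proving that lemma by a finite inspection of $S_4$ (which has $12$ vertices).

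A secondary point worth isolating as its own claim: since $\KG(2k+2,k)$ contains $\SG(2k+2,k)$ as a subgraph and also contains $K_4$ (e.g.\ for suitable small witnesses, or via a sub-Kneser-graph), Part 2 is genuinely stronger than just "$K_4\nrightarrow S_4$" — indeed $K_4\to S_4$ does hold, so the obstruction must be global, not local, which is exactly why the topological input is essential and a purely combinatorial/local argument will not suffice.
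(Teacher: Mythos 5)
Your Part~2 plan has a structural problem that cannot be repaired by sharpening the analysis of $S_4$. The Zig-zag theorem applies to \emph{every} topologically $4$-chromatic graph, and $\SG(2k+2,k)$ is topologically $4$-chromatic just like $\KG(2k+2,k)$. So whatever obstruction you extract from ``every proper coloring contains a completely multicolored $K_{2,2}$ (with the Zig-zag ordering restriction)'' would apply equally to $\SG(2k+2,k)$ --- but Part~1 says $\SG(2k+2,k)\to S_4$ \emph{does} exist. Indeed, composing $\SG(6,2)\to S_4$ with any proper $4$-coloring of $S_4$ shows that $S_4$ itself must contain multicolored $K_{2,2}$'s of exactly the Zig-zag form, so the lemma you hope to prove by finite inspection of $S_4$ is false. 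What is needed is a structural fact true of Kneser graphs but \emph{not} of Schrijver graphs. The paper's proof uses Chen's Alternative Kneser Coloring Theorem (Theorem~\ref{thm:altKncol}): any proper $4$-coloring of $\KG(2k+2,k)$ contains $K_{4,4}$ minus a perfect matching with all four colors appearing on each side. That subgraph has $12$ edges, but in a coloring--orientation pair witnessing $\psi_d\le 2$ with $4$ colors, each of its $8$ vertices can have at most one out-edge inside the subgraph (the out-neighborhood is monochromatic and all four colors appear among its neighbors there), giving at most $8$ edges --- contradiction. The point your ``secondary claim'' gestures at is correct (the obstruction is global, not local), but the global input has to be Kneser-specific; topological $4$-chromaticity alone is not enough to separate $\KG$ from $\SG$.

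For Part~1, your direct construction of an $S_4$-labeling from the cyclic gap structure of stable $k$-sets for all $k$ is a genuinely different (and considerably more laborious) route than the paper's, and step~(ii) is left too vague to assess. The paper instead observes the easy homomorphism $\SG(n,k)\to\SG(n-2,k-1)$ (delete the largest element of each stable set), which reduces everything to the single base case $\SG(6,2)\to S_4$, verified by an explicit $15$-vertex picture. If you want to pursue a uniform labeling, the alternative argument given in the paper's Remark (which uses the explicit layer/Möbius-ladder structure of $\SG(2k+2,k)$, colors the $3$-chromatic part $V\setminus(A\cup B)$ cyclically, and extends to the $K_{k+1,k+1}$) is closer in spirit to what you are after, but it still relies on detailed structural knowledge of $4$-chromatic Schrijver graphs rather than a one-line invariant.
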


In order to prove Theorem \ref{thm:toS4}, we will use the following result of Peng-An Chen \cite{PAC} (see also \cite{CLZ, LZ} for simplified proofs).

\begin{thm}\label{thm:altKncol} {\rm (Alternative Kneser coloring theorem \cite{PAC})}
  Let $n\ge 2k\ge 2$ and assume that $c$ is a proper coloring of the graph $\KG(n,k)$ with $\chi(\KG(n,k))=n-2k+2$ colors. 
  Then there exists two disjoint $(k-1)$-element subsets $S$ and $T$ of $[n]$ such that for all $i\in[n]\setminus(S\cup T)$ we have $c(S\cup\{i\})=c(T\cup\{i\})$. 
  Note that for $i\ne i'$ these colors must be distinct.
\end{thm}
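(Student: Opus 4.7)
I would follow the approach of P.-A.\ Chen and prove the theorem by contradiction via the octahedral version of Tucker's lemma. Suppose $c : \binom{[n]}{k} \to [n-2k+2]$ is a proper coloring for which \emph{no} disjoint pair of $(k-1)$-subsets $S, T \subseteq [n]$ satisfies $c(S \cup \{i\}) = c(T \cup \{i\})$ for every $i \in [n] \setminus (S \cup T)$. My aim is to construct an antipodal labeling $\lambda : \{-1,0,+1\}^n \setminus \{0^n\} \to \{\pm 1, \ldots, \pm(n-1)\}$ that admits no complementary pair $\sigma \prec \tau$ with $\lambda(\sigma) = -\lambda(\tau)$, contradicting Tucker's lemma. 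I identify each nonzero vector $x$ with the ordered disjoint pair $(x^+, x^-)$, where $x^\pm = \{i : x_i = \pm 1\}$; the antipodal action $x \mapsto -x$ swaps the two sides.

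The labeling $\lambda$ is defined in two regimes. In the \emph{large-support regime} $\max(|x^+|,|x^-|) \ge k$, compare the maximum color $m$ achieved by $c$ on a $k$-subset of $x^+$ with the maximum color achieved on a $k$-subset of $x^-$, and set $\lambda(x) = \pm(m + 2k - 2)$ with sign indicating the side that attains the larger value. A Tucker-complementary pair whose both entries lie in this regime would produce disjoint $k$-subsets $A \subseteq x^+$, $B \subseteq x^-$ with $c(A) = c(B)$, i.e.\ a monochromatic $\KG(n,k)$-edge, violating properness of $c$. In the \emph{small-support regime} $\max(|x^+|,|x^-|) \le k - 1$, I use labels in $\{\pm 1, \ldots, \pm(2k-2)\}$ derived from the support pattern alone via a fixed antipodal rule. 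Consequently, any complementary pair produced by Tucker's lemma must bridge the two regimes, with $\sigma$ lying in the small regime and $\tau$ extending $\sigma$ into the large regime.

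For such a bridging pair, the equality $\lambda(\sigma) = -\lambda(\tau)$ encodes the assertion that extending $S = \sigma^+$ and $T = \sigma^-$ by a specific new index $i \in [n] \setminus (S \cup T)$ yields the same color on the two sides, i.e.\ $c(S \cup \{i\}) = c(T \cup \{i\})$. The contradiction hypothesis supplies, for every disjoint pair $(S, T)$ of $(k-1)$-subsets, a witnessing index $i = i(S,T)$ with $c(S \cup \{i\}) \ne c(T \cup \{i\})$; feeding this witness into the small-regime labels refines $\lambda$ so that no bridging complementary pair survives, and simultaneously absorbs the borderline value $\pm n$ that would otherwise arise from $m = n - 2k + 2$, keeping $\lambda$ inside $\{\pm 1, \ldots, \pm(n-1)\}$. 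Tucker's lemma then furnishes the contradiction. The principal obstacle — and the technical heart of Chen's argument — is verifying that this refinement can be carried out coherently, respecting antipodality across all of $\{-1,0,+1\}^n \setminus \{0^n\}$ simultaneously, so that every possible complementary pair is genuinely destroyed. The topological input itself enters only at the end, as a single black-box application of the octahedral Tucker lemma.
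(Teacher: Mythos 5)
This statement is not proved in the paper at all: it is quoted as an external result of Peng-An Chen \cite{PAC} (with simplified proofs in \cite{CLZ,LZ}) and then used as a black box. So the only question is whether your sketch would actually constitute a proof, and as it stands it does not.

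There are two genuine gaps. First, the architectural one: the conclusion of the theorem is universally quantified over $i$ --- you must produce disjoint $(k-1)$-sets $S,T$ with $c(S\cup\{i\})=c(T\cup\{i\})$ for \emph{all} $n-2k+2$ indices $i\in[n]\setminus(S\cup T)$. A single complementary pair $\sigma\prec\tau$ coming out of the octahedral Tucker lemma can, at best, encode agreement of the two colors for \emph{one} extension index, which is exactly why your ``bridging pair'' analysis only refutes the existence of a single witnessing $i$, not of all of them; the negation of the theorem only says that every pair $(S,T)$ has \emph{some} bad index, so one complementary edge gives no contradiction. This is precisely the reason the known proofs (Chen's original one and the simplifications of Chang--Liu--Zhu and Liu--Zhu) do not use plain Tucker but Ky Fan's lemma, whose output is an alternating simplex carrying many labels of alternating signs; it is that long alternating chain of labels, not a single complementary pair, that encodes the full family of equalities $c(S\cup\{i\})=c(T\cup\{i\})$ simultaneously. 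Second, even within your own plan the decisive construction is missing: you say the small-support labels are determined ``by a fixed antipodal rule'' from the support pattern, yet two sentences later the witness $i(S,T)$ must be ``fed into the small-regime labels,'' and the overflow label $\pm n$ (which arises in the \emph{large} regime, when the maximum color $n-2k+2$ is attained, since $m+2k-2$ then equals $n$) is claimed to be ``absorbed'' by this refinement without any definition of how. You yourself flag this coherence verification as ``the technical heart of Chen's argument'' and leave it undone; but that step is the entire content of the theorem --- the two-regime Tucker labeling you describe is, up to the unproved refinement, just the standard Tucker-lemma proof that $\chi(\KG(n,k))\ge n-2k+2$, which does not by itself yield the structural conclusion claimed here. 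To repair the argument you would need to switch to Fan's lemma (or an equivalent $\mathbb{Z}_2$-index argument) and carry out the labeling and the alternating-simplex analysis explicitly, as in \cite{PAC,CLZ,LZ}.
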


\begin{figure} [h!]
    \centering
    \includegraphics[width = 6 cm]{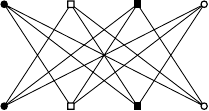}
    \caption{By Theorem \ref{thm:altKncol}, any properly $4$-colored $\KG(2k+2,k)$ contains this graph $G$ as a subgraph.}
    \label{fig:k44}
\end{figure}

\medskip
\par\noindent
{\it Proof of Theorem~\ref{thm:toS4}.}
Let $G$ denote the bipartite graph formed by removing a perfect matching from $K_{4,4}$.
Fix a $4$-coloring of $G$, where both of its partite classes contain a vertex of each color, just as shown in Figure \ref{fig:k44}.
To prove the second statement, note that Chen's Alternative Kneser coloring theorem (Theorem~\ref{thm:altKncol}) implies that any properly $4$-colored $\KG(2k+2,k)$ contains $G$ as a subgraph, with both of its partite classes containing a vertex of each color.
In particular, this means that if there were an orientation and a $4$-coloring of $\KG(2k+2,k)$ realizing $\psi_d=2$, then within this copy of $G$ every vertex could have at most one outgoing edge to another vertex in the same subgraph. 
As a result, the number of edges within this subgraph would be at most $8$. 
However, since $G$ has $12$ edges, this leads to a contradiction.
Therefore, such an orientation and $4$-coloring cannot exist.
In particular, by Proposition~\ref{prop:shiftuni}, $\KG(2k+2,k)$ admits no homomorphism into $S_4$.

It is well-known that for every $k$ one has $\KG(n,k)\to\KG(n-2,k-1)$, see Proposition 6.26 in \cite{HN}, and the analogous statement for Schrijver graphs is even more straightforward. 
Indeed, deleting the largest element of every stable $k$-subset of $[n]$ results in a stable $(k-1)$-subset of $[n-2]$ and since removing elements cannot violate disjointness, this proves $\SG(n,k)\rightarrow \SG(n-2,k-1).$ 
Therefore, the first statement will follow if we prove it for $k=2$, namely, if we show the existence of the homomorphism $\SG(6,2) \rightarrow S_4$. 
This is by Proposition \ref{prop:shiftuni} equivalent to showing that $\SG(6,2)$ has an orientation and a $4$-coloring attaining $\psi_{d,min}(\SG(6,2))=2$.
The appropriate coloring and orientation are illustrated in Figure \ref{fig:sg_6_2}.
\hfill$\Box$

\begin{figure} [h!]
    \centering
    \includegraphics[width = 8.5 cm]{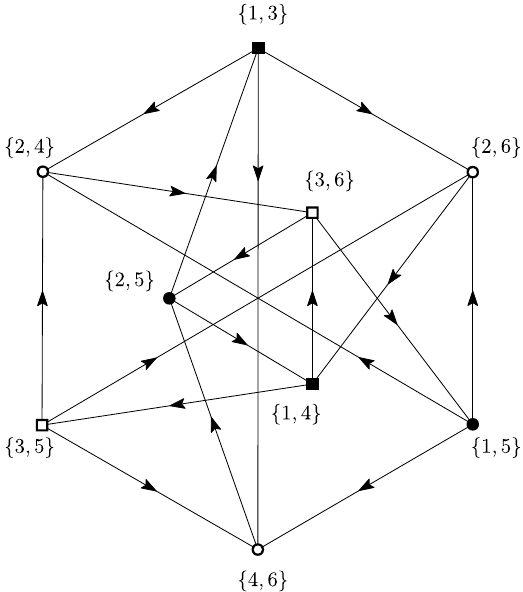}
    \caption{Orientation and a $4$-coloring attaining $\psi_{d,min}(\SG(6,2))=2$.}
    \label{fig:sg_6_2}
\end{figure}

Note that for any graph $H$ and any topologically $t$-chromatic graph $G$, we have that if a homomorphism $G \rightarrow H$ exists, then $H$ is also topologically $t$-chromatic.
Also note that for all $m \geq 4$ we have $S_4 \subseteq S_m$, thus a homomorphism $S_4 \rightarrow S_m$ exists.
Therefore the first statement of Theorem~\ref{thm:toS4} also implies that $S_4$ (and all $S_m$ with $m\ge 4$) is topologically $4$-chromatic contrary to an unproven remark in \cite{STdir}, where it is shown, on the other hand, that $S_m$ is not topologically $5$-chromatic.

\begin{remark}
{\rm Since the structure of $4$-chromatic Schrijver graphs is well-understood, cf. \cite{BB1,BB2,STActa}, one can also prove the first statement of Theorem~\ref{thm:toS4} more directly, that is without referring to $\SG(n,k)\rightarrow\SG(n-2,k-1).$
For any $k\ge 2$, the graph $\SG(2k+2,k)$ contains a unique (induced) complete bipartite graph $K_{k+1,k+1}$ (between vertices containing $k$ odd elements and vertices containing $k$ even elements of $[2k+2]$). 
Call the two partite classes of this bipartite graph $A$ and $B$. 
It is also true that all vertices in $A\cup B$ are incident to exactly one edge whose other endpoint is not in $A\cup B$.
Moreover, this vertex is different for all vertices in $A$ and also different for all vertices in $B$. 
(In fact, when $k>2$, all $2k+2$ vertices are distinct, though this is not essential for our argument.) 
Consider the subgraph induced on $V(\SG(2k+2,k))\setminus (A\cup B)$. 
It is $3$-colorable, because Schrijver graphs are vertex-critical with respect to the chromatic number. 
Color this subgraph with at most $3$ colors $c_0, c_1, c_2$ and orient the edges between them so that edges connecting vertices colored $c_i$ and $c_{i+1}$ are oriented toward the latter (where addition is intended modulo $3$).
Color the vertices in $B$ by $c_3$ and orient all edges between $A$ and $B$ towards the vertex in $B$. 
Let $U:=V(\SG(2k+2,k))\setminus (A\cup B)$.
We orient the edges connecting vertices in $B$ to a vertex in $U$ outwards from $B$, and edges connecting vertices in $U$ to a vertex in $A$ are oriented towards $A$. 
If the so-oriented in-neighbor of a vertex $a\in A$ is colored $c_i$, we color $a$ with color $c_{i+1}$.
This results in a properly colored oriented graph where every $c_i$-colored vertex of $U$ has only $c_{i+1}$-colored vertices in its out-neighborhood, vertices in $A$ have only $c_3$-colored vertices in their outneighborhood and vertices in $B$ have outdegree $1$, so their outneighborhood is also monochromatic. 
This proves $\psi_{d,{\rm min}}(\SG(2k+2,k))=2$ for all $k\ge 2$.}
$\Diamond$
\end{remark}

Note that Theorem~\ref{thm:toS4} does not imply that $\psi_{d,{\rm min}}(\KG(2k+2,k))>2$, only that the value $2$  cannot be attained using $4$ colors. 
At least for the smallest such graph different from $K_4$, which is $\KG(6,2)$, we prove below that it cannot have directed local chromatic number $2$ for any orientation.

  \begin{prop}~\label{prop:KG62}
$$\psi_{d,{\rm min}}(\KG(6,2))=3.$$
  \end{prop}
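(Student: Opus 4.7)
The plan is to prove the two inequalities separately. For the upper bound $\psi_{d,{\rm min}}(\KG(6,2)) \le 3$, I use the fact that $\chi(\KG(6,2)) = 4$, so there is a homomorphism $\phi: \KG(6,2) \to K_4$. Orient $K_4$ as a tournament in which every vertex has out-degree at most $2$ (for instance, take the cyclic triangle $1 \to 2 \to 3 \to 1$ together with $4 \to 1$, $2 \to 4$, $4 \to 3$). Pull this orientation back along $\phi$ by orienting $\{A,B\}$ of $\KG(6,2)$ to match the orientation of $\{\phi(A), \phi(B)\}$ in $K_4$, and color $v$ by $\phi(v)$. Then $|c(N^+[v])| \le |N^+[\phi(v)]| \le 3$ for every $v$, as required.

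For the lower bound $\psi_{d,{\rm min}}(\KG(6,2)) \ge 3$, I argue by contradiction. Suppose some orientation $\overrightarrow{G}$ of $\KG(6,2)$ attains $\psi_d(\overrightarrow{G}) = 2$; equivalently, $\overrightarrow{G}$ contains no alternating odd cycle. The shortest odd cycles in $\KG(6,2)$ are triangles, and $\KG(6,2)$ is strongly regular with $\lambda = 1$: every edge lies in a unique triangle, namely the triangle formed with the $2$-subset of $[6]$ complementary to its two endpoints. Any triangle that is not a directed $3$-cycle is itself an alternating odd cycle, so every triangle of $\overrightarrow{G}$ must be cyclically oriented. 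Consequently each vertex has in- and out-degree exactly $3$, and its three out-neighbors form an independent transversal of the perfect matching induced by its six neighbors.

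Given this rigid local structure, I would run a case analysis using the $S_6$-symmetry to fix $v_0 = \{1,2\}$. Up to the stabilizer of $v_0$, the out-neighborhood of $v_0$ lies in one of two orbits: \emph{starlike} (three $2$-subsets sharing a common element, e.g.\ $\{3,4\}, \{3,5\}, \{3,6\}$) or \emph{triangular} (the three $2$-subsets of a $3$-subset of $\{3,4,5,6\}$, e.g.\ $\{3,4\}, \{3,5\}, \{4,5\}$). In each case, propagate the cyclic-triangle requirement to the out-neighbors of $v_0$ and beyond, and invoke the coloring interpretation of $\psi_d(\overrightarrow{G}) = 2$: in a realizing proper coloring, the three out-neighbors of every vertex must all receive one common color. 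Chasing the resulting chain of forced color identifications through the neighborhoods of a handful of vertices (such as $v_0, \{3,4\}, \{4,5\}, \{5,6\}$), one eventually identifies two adjacent vertices of $\KG(6,2)$ as same-colored, contradicting properness; equivalently, one locates an explicit alternating $5$-cycle in $\overrightarrow{G}$. The main obstacle is executing this bookkeeping cleanly and uniformly across the two orbits, but the rigid structure --- strong regularity, every edge in a unique triangle, and out-neighborhoods forced to be transversals --- keeps the essentially distinct configurations few enough to be manageable by hand.
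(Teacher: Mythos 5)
Your upper bound argument is correct and is essentially the paper's: the paper invokes the general fact that every $4$-chromatic graph has $\psi_{d,\min}\le 3$ (obtained exactly by pulling back an orientation of $K_4$ with all out-degrees at most $2$, as you do explicitly).

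The lower bound, however, has a genuine gap. You correctly establish the preliminary structure (every triangle must be cyclically oriented, hence every vertex has out-degree and in-degree exactly $3$, and the out-neighborhood is a transversal of the $3K_2$ induced on the neighborhood), but the actual contradiction is only sketched: ``run a case analysis\ldots propagate\ldots one eventually identifies\ldots'' is a plan, not a proof, and you acknowledge as much. In particular, nothing in your outline controls the \emph{number of colors} used, so ``chasing forced color identifications'' is under-constrained --- one would need to either bound the number of colors or show that an alternating odd cycle is forced combinatorially, independent of any coloring. Neither is carried out.

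The paper closes the argument by a counting scheme that sidesteps the case analysis entirely, and it may be worth recording for comparison. Starting from the same observation that every out-degree is exactly $3$ (the paper derives this from the $3K_2$ neighborhood structure and an edge count rather than from cyclic triangles, but the conclusion is the same), one notes that every vertex has in-degree $3\ge 1$, so each vertex lies in some (monochromatic, size-$3$) out-neighborhood; hence every color class has at least $3$ vertices, and thus at most $5$ colors are used. Since $\chi(\KG(6,2))=4$ and the earlier result $\KG(6,2)\nrightarrow S_4$ rules out $4$ colors, there are exactly $5$ color classes, each of size $3$. Each such class is a set of three pairwise-intersecting $2$-subsets of $[6]$, i.e.\ either a $3$-edge star or a triangle of $K_6$. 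A star color class is the full out-neighborhood of exactly one vertex; a triangle class, of exactly three. To cover all $15$ vertices this forces all five classes to be triangles --- but $E(K_6)$ cannot be partitioned into triangles (every vertex of $K_6$ has odd degree). That parity obstruction is the clean contradiction that your outlined case analysis would have to reproduce by hand, so you may prefer to adopt the counting route; alternatively, if you keep your approach, you must actually execute the propagation through both orbits and exhibit the forced alternating odd cycle or color clash.
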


  \proofka
  Assume for contradiction that $\KG(6,2)$ admits an orientation $\overrightarrow{\KG}(6,2)$ attaining $\psi_d(\overrightarrow{\KG}(6,2))=2$. Note that the neighborhood of any vertex $v$ of $\KG(6,2)$ (to which it is worth thinking about as the complement of the line graph of $K_6$) induces $3K_2$. 
  Thus, in any proper coloring, at most three of them can have the same color, so to attain $\psi_d(\overrightarrow{\KG}(6,2))=2$, the outdegree of any vertex must be at most $3$. 
  The sum of outdegrees is always the total number of edges, which is, in our case, exactly three times the number of vertices. 
  So each outdegree must be exactly $3$. 
  Since each vertex has an indegree of at least one, this also implies that every color class should contain at least $3$ vertices. 
  Since there are $15$ vertices in $\KG(6,2)$, this implies that we can have at most $5$ color classes. 
  We already know from Theorem ~\ref{thm:toS4} that we cannot have exactly $4$ color classes, so the only option is to use $5$ colors, and each color is used exactly three times. Call a color class a {\em star color class} if the three element sets belonging to it have one common element (i.e., as edges of $K_6$ they form a $3$-edge star) and call it a {\em triangle color class} if the pairwise intersections of the three sets in it are all different (i.e., as edges of $K_6$ they form a triangle). For a star color class, there is exactly one vertex whose neighborhood contains all its vertices (there is exactly one edge of $K_6$ that is disjoint from a $3$-edge star), and for a triangle color class, there are exactly three such vertices in $\KG(6,2)$. Since we have $15$ vertices and $5$ color classes, this implies that to have a monochromatic neighborhood for all vertices, we need that all five color classes are triangles. But it is easy to check that the edge set of $K_6$ cannot be partitioned into $5$ triangles. (For example, it would need all vertices in $K_6$ to have an even degree.) So we have
  $\psi_{d,{\rm min}}(\KG(6,2))\ge 3.$ Equality follows by simply observing that every $4$-chromatic graph $G$ has $\psi_{d,{\rm min}}(G)\le 3$. (This is easy to see, but cf. also Proposition 4 in \cite{STdir}.)
  \hfill$\Box$

The following is an immediate consequence of Propositions~\ref{prop:shiftuni} and \ref{prop:KG62}.

\begin{cor}\label{cor:62nohm}
For all $m$ we have
$$\KG(6,2)\nrightarrow S_m.$$
\end{cor}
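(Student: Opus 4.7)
The plan is to reduce the statement to a contradiction with Proposition~\ref{prop:KG62} by way of Proposition~\ref{prop:shiftuni}. Suppose for contradiction that there exists a positive integer $m$ such that $\KG(6,2) \rightarrow S_m$. Applying the ``if'' direction of Proposition~\ref{prop:shiftuni}, this homomorphism pulls back to an orientation $\overrightarrow{\KG}(6,2)$ of $\KG(6,2)$, together with an associated proper $m$-coloring, witnessing $\psi_d(\overrightarrow{\KG}(6,2)) \le 2$. In particular this forces $\psi_{d,{\rm min}}(\KG(6,2)) \le \psi_d(\overrightarrow{\KG}(6,2)) \le 2$, which directly contradicts Proposition~\ref{prop:KG62}, asserting $\psi_{d,{\rm min}}(\KG(6,2)) = 3$.

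The only point worth spelling out is that the value of $m$ plays no role in the argument: Proposition~\ref{prop:shiftuni} is stated parametrically in $m$, but its conclusion $\psi_d(\overrightarrow{G}) \le 2$ is the same regardless of how many colors are used to attain it, and this is the only information needed to conflict with the lower bound $\psi_{d,{\rm min}}(\KG(6,2)) \ge 3$. Consequently, the nonexistence of a homomorphism into any single $S_m$ follows uniformly across all $m$. There is no substantive obstacle here; the corollary is an immediate consequence of the two preceding propositions, exactly as the text preceding the statement indicates.
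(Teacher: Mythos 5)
Your proof is correct and follows precisely the route the paper intends: the corollary is presented there as an immediate consequence of Propositions~\ref{prop:shiftuni} and \ref{prop:KG62}, which is exactly the reduction you carry out. Nothing further is needed.
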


The corollary generalizes Theorem \ref{thm:toS4} for the special case $\KG(6,2)$, showing that no homomorphism to any symmetric shift graph exists. 
This naturally leads to the question of whether similar results hold for larger 4-chromatic Kneser graphs. 
The situation remains unresolved even for the next smallest example.

\begin{ques}
\label{ques:kg-no}
    Does there exist an integer $m$ such that $\KG(8,3) \rightarrow S_m$? More generally, for which integers $k \geq 3$ does there exist an $m$ such that $\KG(2k+2, k) \rightarrow S_m$?
\end{ques}

\bigskip

Next, we prove that the Grötzsch graph, that is, the $4$-chromatic Mycielski graph $M(C_5)=M(M(K_2))$, just like $4$-chromatic Schrijver graphs, admits a homomorphism to $S_4$. This means that it admits an orientation and a $4$-coloring in which the outneighborhood of every vertex is monochromatic. We will also show that no generalized Mycielskian of the triangle has this property.
The consequences of these observations are summarized in the following theorem. (Note that $M_1(K_2)=K_3$.)

\begin{thm}\label{thm:MyctoS4}
Let $r_1,r_2$ be positive integers. We have
$$\psi_{d,{\rm min}}(M_{r_1,r_2}(K_2))=2$$
if and only if both $r_1, r_2\ge 2$.
In this case directed local chromatic number $2$ can be attained with $4$ colors, that is, $$M_{r_1,r_2}(K_2)\rightarrow S_4,$$
while for every positive integers $r$ and $m$
$$M_{1,r}(K_2)\nrightarrow S_m\ \ {\rm and}\ \ M_{r,1}(K_2)\nrightarrow S_m$$ holds.
\end{thm}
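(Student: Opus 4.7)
The plan is to split the argument into the constructive ``if'' direction (when both $r_1,r_2\ge 2$) and two independent obstructions for the ``only if'' part ($r_1=1$ and $r_2=1$), using Proposition~\ref{prop:shiftuni} to translate $\nrightarrow S_m$ into the impossibility of orienting the graph without alternating odd cycles.

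For the constructive direction, I first observe that $M_r(K_2)=C_{2r+1}$ for every $r\ge 1$ (by direct unfolding of the $r$-level generalized Mycielskian of a single edge), so $M_{r_1,r_2}(K_2)=M_{r_2}(C_{2r_1+1})$. Since $r_1\ge 2$, the classical odd-cycle fold gives $C_{2r_1+1}\to C_5$. The Mycielski construction is functorial: a homomorphism $\phi\colon G\to G'$ lifts to $(v,i)\mapsto(\phi(v),i)$, $z\mapsto z$ as a homomorphism $M_r(G)\to M_r(G')$, so this gives $M_{r_2}(C_{2r_1+1})\to M_{r_2}(C_5)$. Next, for every graph $G$ and every $r\ge 2$ there is a collapsing homomorphism $M_r(G)\to M_{r-1}(G)$ defined by $(v,j)\mapsto(v,j)$ for $j\le r-2$, $(v,r-1)\mapsto z'$ (the apex of the target), and $z\mapsto(v_0,r-2)$ for any fixed $v_0\in V(G)$; edge preservation is routine, using that $z'$ is adjacent to every level-$(r-2)$ vertex. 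Iterating this collapse reduces the problem to $M_2(C_5)=M(C_5)$, the Gr\"otzsch graph, for which I would exhibit a direct homomorphism to $S_4$ by hand---for example sending $v_0,\dots,v_4$ to $(1,2),(2,1),(1,3),(3,2),(2,4)$, the middle-layer vertices $u_0,\dots,u_4$ to $(4,2),(4,1),(1,3),(4,1),(2,3)$, and the apex to $(3,4)$; the twenty edges are then straightforward to check.

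For the case $M_{r,1}(K_2)=M_1(C_{2r+1})$ the argument is short: every cycle edge $v_iv_{i+1}$ forms a triangle with the universal vertex $z$, and in any orientation avoiding alternating odd cycles every triangle is forced to be cyclic (the three non-cyclic orientations of a $3$-cycle are each alternating). Considering two consecutive triangles that share the edge $v_{i+1}z$, the cyclicity forces both cycle edges at $v_{i+1}$ to point the same way, so every vertex of $C_{2r+1}$ is either a source or a sink in the induced orientation---a proper $2$-coloring of an odd cycle, which is impossible.

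The case $M_{1,r}(K_2)=M_r(K_3)$ is the main obstacle. I would rule out any homomorphism $\phi\colon M_r(K_3)\to S_m$ by propagating images upward from the base. Since every triangle in $S_m$ has the form $\{(i,j),(j,k),(k,i)\}$ for distinct $i,j,k$, the image of the base triangle $(a,0)(b,0)(c,0)$ fixes---after relabelling---$\phi(u,0)=\alpha_u$ with $\alpha_a=(i,j)$, $\alpha_b=(j,k)$, $\alpha_c=(k,i)$. The three diagonal triangles $(u,0)(v,0)(w,1)$ with $\{u,v,w\}=\{a,b,c\}$ then force $\phi(w,1)=\alpha_w$ by uniqueness of the triangle completion, and a short chain-computation shows that for any two of $\alpha_a,\alpha_b,\alpha_c$ the unique common neighbor in $S_m$ is the third. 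An induction on the level $j$ therefore yields $\phi(u,j)=\alpha_u$ for every $u\in\{a,b,c\}$ and every $0\le j\le r-1$. The contradiction appears at the apex: $\phi(z)=(x,y)$ would need to be adjacent in $S_m$ to all three $\alpha_a,\alpha_b,\alpha_c$, giving the conjunctive conditions ``$x=j$ or $y=i$'', ``$x=k$ or $y=j$'', and ``$x=i$ or $y=k$'', whose case analysis forces $x=y$ in every branch---impossible in $V(S_m)$.
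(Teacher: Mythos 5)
Your proof is correct and follows the same overall structure as the paper's: reduce the affirmative case to the Gr\"otzsch graph via functoriality of $M_r(\cdot)$ together with a level-reducing homomorphism, rule out the wheel $M_1(C_{2r+1})$ via the parity of cyclic triangles, and rule out $M_r(K_3)$ by propagating a forced assignment up the levels to reach a contradiction at the apex. A few points of difference are worth recording. First, you give an explicit vertex map $M(C_5)\rightarrow S_4$ instead of the figure-based orientation used for Lemma~\ref{lem:GrtoS4}; I checked all twenty edges and the map is indeed a homomorphism. Second, your level-collapse $M_r(G)\to M_{r-1}(G)$ sends the entire top layer onto the apex $z'$ and moves $z$ onto a single vertex of the second-highest layer of the target, iterating one level at a time, whereas the paper's map $\varphi(v,i)=(v,\max\{i-(r'-r),0\})$ (the printed $\min$ is plainly a typo for $\max$) compresses the bottom $r'-r+1$ layers to layer $0$ in a single step; both are valid. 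Third, for $M_r(K_3)$ you argue directly about a hypothetical homomorphism into $S_m$, using the structural facts that every triangle in $S_m$ has the form $\{(i,j),(j,k),(k,i)\}$ and that two adjacent vertices of $S_m$ have a \emph{unique} common neighbor; the paper runs the equivalent argument in the language of orientations and colorings via Proposition~\ref{prop:shiftuni}, and your version makes the ``forcing'' mechanism cleaner and visibly independent of $m$. Finally, you correctly identify $M_{1,r}(K_2)=M_r(K_3)$ and $M_{r,1}(K_2)=M_1(C_{2r+1})$, whereas the paper's proof text has these two labels interchanged (the arguments given there are still the right ones, just attached to the wrong subscript order). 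In short: correct proof, same plan, with a cleaner treatment of the $M_r(K_3)$ obstruction and an explicit $S_4$-map replacing the figure.
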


\begin{lemma}\label{lem:GrtoS4}
$$M(C_5)\rightarrow S_4.$$
\end{lemma}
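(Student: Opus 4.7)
By Proposition~\ref{prop:shiftuni}, the claim $M(C_5)\to S_4$ is equivalent to exhibiting an orientation and $4$-coloring of the Grötzsch graph $M(C_5)$ in which every closed out-neighbourhood uses only two colors. The plan is to search directly for the homomorphism rather than for a coloring plus orientation. Label the vertices so that the inner $C_5$ has vertices $a_0,\dots,a_4$ (indices mod $5$), each $a_i$ has a Mycielski twin $b_i$ adjacent to $a_{i-1}$ and $a_{i+1}$, and $z$ is the apex adjacent to all five $b_i$'s. Recall that two vertices $(i,j),(k,\ell)$ of $S_4$ are adjacent iff $j=k$ or $i=\ell$, and in particular every vertex of $S_4$ has exactly $5$ neighbours.

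The construction of $\phi$ will proceed in three stages. First, embed the inner $C_5$ as a $5$-cycle in $S_4$. Second, place each $b_i$ into the common $S_4$-neighbourhood of $\phi(a_{i-1})$ and $\phi(a_{i+1})$, which can be read off directly from the adjacency rule (and may force some of the $b_i$-images). Third, place $z$ into the common $S_4$-neighbourhood of all five $\phi(b_i)$'s. Since the $b_i$'s are pairwise non-adjacent in $M(C_5)$, several of them may share an image, which we will exploit.

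The third stage is expected to be the main obstacle: an apex-image must be simultaneously adjacent to all five $\phi(b_i)$'s, but $S_4$-vertices have only $5$ neighbours, so once the second stage forces some $b_i$-images the apex may simply fail to exist. Thus the $C_5$-image in the first stage has to be tuned so that a common neighbour of the $\phi(b_i)$'s survives. A working choice is
\[
\phi(a_0)=(1,2),\ \phi(a_1)=(2,3),\ \phi(a_2)=(3,4),\ \phi(a_3)=(4,2),\ \phi(a_4)=(2,4),
\]
together with $\phi(z)=(1,3)$, $\phi(b_0)=(3,2)$, $\phi(b_1)=\phi(b_3)=(4,1)$, $\phi(b_2)=(3,4)$ and $\phi(b_4)=(2,1)$; here $N_{S_4}((1,3))=\{(3,1),(3,2),(3,4),(2,1),(4,1)\}$, and all $\phi(b_i)$ lie in this set, while the $C_5$-edges and Mycielski-edges can be checked to satisfy the $j=k$ or $i=\ell$ condition. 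What remains is a routine pass through the $20$ edges of $M(C_5)$ verifying the $S_4$-adjacency rule, which I omit.
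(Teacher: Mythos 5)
Your construction is correct: I checked all $20$ edges of $M(C_5)$ against the adjacency rule of $S_4$ and the map $\phi$ is indeed a graph homomorphism (and the only identified vertices, $b_1$ with $b_3$ and $a_2$ with $b_2$, are non-adjacent). This is essentially the same approach as the paper — an explicit certificate — with the minor cosmetic difference that you give the homomorphism into $S_4$ directly, while the paper presents the equivalent data as an orientation plus $4$-coloring of the Grötzsch graph (Figure~\ref{fig:Gr}) and invokes Proposition~\ref{prop:shiftuni} to translate.
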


\proofka
We give an orientation and a $4$-coloring of $M(C_5)$, which is the Grötzsch graph, so that every out-neighborhood is monochromatic. This is shown in Figure \ref{fig:Gr}. Note that three of the edges are not oriented on the figure because, whatever way they are oriented, the out-neighborhoods remain monochromatic, so their orientation can be chosen freely. It is easy to check that indeed every out-neighborhood contains only one color.
By Proposition~\ref{prop:shiftuni}, the existence of this orientation and $4$-coloring implies the statement.
\hfill$\Box$

\begin{figure} [h!]
    \centering
    \includegraphics[width = 8 cm]{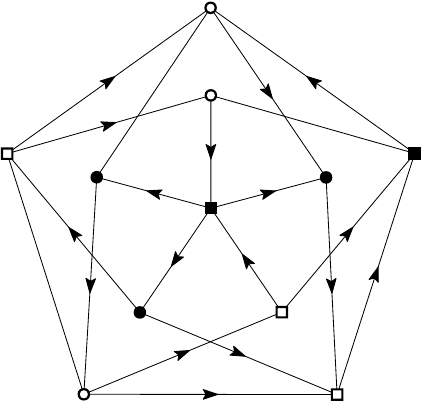}
    \caption{An orientation and $4$-coloring of the Grötzsch graph attaining directed local chromatic number $2$. For three edges, no orientation is shown on the picture because they can be oriented either way.}
    \label{fig:Gr}
\end{figure}

\medskip
\par\noindent
{\em Proof of Theorem~\ref{thm:MyctoS4}}.
In the light of Lemma~\ref{lem:GrtoS4} the first statement follows by showing that for $r_1,r_2\ge 2$ we have $M_{r_1,r_2}(K_2)\rightarrow M_{2,2}(K_2)$. This is immediate if we observe the following two facts. One is that for any graph $G$ and $r'\ge r$ we always have $M_{r'}(G)\rightarrow M_r(G)$.
This follows simply by seeing that the map $\varphi$ that maps the $z$ vertex of $M_{r'}(G)$ to the $z$ vertex of $M_r(G)$, while for the rest of the vertices of $M_{r'}(G)$ it is defined by
$$\varphi(v,i)=(v,\min\{i-(r'-r),0\}).$$
(In the above formula, $\varphi(v,i)$ stands for $\varphi((v,i))$, and a similar convention for the notation of functions taking values on vertices of a generalized Mycielskian will be followed also in later parts of this proof.)
The other fact needed is that if $G\rightarrow H$ then $M_r(G)\rightarrow M_r(H)$ holds for any positive integer $r$.
Indeed, if $f: V(G)\to V(H)$ is a homomorphism from $G$ to $H$, then
$$z_{M_r(G)}\mapsto z_{M_r(H)}, \ \ (v,i)\mapsto (f(v),i)$$
is a homomorphism from $M_r(G)$ to $M_r(H)$.
These two facts and Lemma~\ref{lem:GrtoS4} then imply $$M_{r_1,r_2}(K_2)=M_{r_2}(M_{r_1}(K_2))\rightarrow M_2(M_{r_1}(K_2))\rightarrow M_2(M_2(K_2))=M_{2,2}(K_2)\rightarrow S_4.$$

For the first part of the second statement, note that $M_{1,r}$ is just the wheel $W_{2r+1}$, the graph that consists of a cycle $C_{2r+1}$ plus a vertex $z$ adjacent to all the $2r+1$ vertices of the cycle. Thus $z$ is a vertex of $2r+1$ triangles, all of which should be oriented cyclically to avoid a transitively oriented triangle, which is a special alternating odd cycle. Thus, edges from $z$ to neighboring vertices of the cycle should be oriented in the reverse direction. By the odd length of the cycle, this is impossible.

For the second part of the second statement, consider $M_{r,1}(K_2)=M_r(K_3)$, where $(a,0), (b,0), (c,0)$ correspond to the vertices of the original $K_3$.
Assume for contradiction that an orientation and a coloring $f$ of $M_r(K_3)$ exists that attains directed local chromatic number $2$. Triangles in such an orientation must be cyclic, so we may assume w.l.o.g. that edge $\{(a,0),(b,0)\}$ is oriented towards $(b,0)$, $\{(b,0),(c,0)\}$ towards $(c,0)$, and $\{(c,0),(a,0)\}$ towards $(a,0)$. To keep the triangles induced by vertex subsets $\{(a,0),(b,0),(c,1)\}$, $\{(a,0),(b,1),(c,0)\}$ and $\{(a,1),(b,0),(c,0)\}$ cyclic, the vertex $(a,1)$ should behave towards $(b,0)$ and $(c,0)$ as $(a,0)$, that is edge $\{(a,1),(b,0)\}$ is oriented towards $(b,0)$, edge $\{(c,0),(a,1)\}$ towards $(a,1)$ and the analogous statement is true for $(b,1)$ and $(c,1)$. To keep  outneighborhoods monochromatic this also implies $f(x,1)=f(x,0)$ for all $x\in \{a,b,c\}$. (Clearly $f(a,0), f(b,0), f(c,0)$ are all distinct.)
\medskip\par\noindent
{\em Claim.} We must have $f(x,i)=f(x,0)$ for all $i\in\{0,1,\dots,r-1\}$ and $x\in\{a,b,c\}$.
Furthermore, the edge $\{(x,i),(y,i-1)\}$ should be oriented the same way as $\{(x,0),(y,0)\}$ for all $i\in\{1,2\dots,r-1\}$ and $x,y\in\{a,b,c\}$.
\smallskip\par\noindent
Indeed, we have already seen that the statement is true for $i=1$, so we may assume inductively that it holds up to $i=j-1\ge 1$ and show it for $i=j$. Vertex $(x,j)$ is connected to $(y,j-1)$ and $(w,j-1)$ where $\{y,w\}=\{a,b,c\}\setminus\{x\}$. By the induction hypothesis we have $f(y,j-1)=f(y,0)\neq f(w,0)=f(w,j-1)$. So at most one of the two edges $\{(x,j),(y,j-1)\}$ and $\{(x,j),(w,j-1)\}$ can be oriented away from $(x,j)$. Thus, say, $\{(x,j),(w,j-1)\}$ is oriented towards $(x,j)$, which implies that the color $f(x,j)$ should be the same as the color already appearing in the out-neighborhood of $(w,j-1)$, which is one of the two distinct colors $f(x,j-2)$ and $f(y,j-2)$. But $f(y,j-2)=f((y,j-1)$ by the induction hypothesis, so this color cannot be given to vertex $(x,j)$ as it is adjacent to $(y,j-1)$. Thus $w$ should be that element of $\{a,b,c\}\setminus\{x\}$ for which we have $(x,j-2)$ in the out-neighborhood of $(w,j-1)$ and we must have $f(x,j)=f(x,j-2)=f(x,0)$. The argument also implies that the edge $\{(x,j),(y,j-1)\}$ should be oriented away from $(x,j)$, and thus the Claim is proved.

\smallskip\par\noindent
Applying the Claim for $i=r-1$ we obtain that the three neighbors of vertex $z$, namely $(a,r-1), (b,r-1), (c,r-1)$, have the three distinct colors $f(a,0), f(b,0), f(c,0)$. 
Moreover, each of these vertices already has exactly one of these three colors in its out-neighborhood. 
This implies that $f(z)$ must be assigned a fourth color, meaning that none of the three incident edges can be oriented toward $z$. 
However, this would result in $z$ having all three distinct colors in its out-neighborhood, contradicting our assumption.
\hfill$\Box$

Previously, we showed that the Grötzsch graph admits an orientation without any alternating odd cycle.
This naturally led us to ask whether the same property holds for another well-known graph, the Clebsch graph, which in fact contains many copies of the Grötzsch graph.
The Clebsch graph is also known as the Greenwood-Gleason graph \cite{GG}, in connection with the determination of the Ramsey number $R(3,3,3)=17$.

\begin{prop} \label{prop:cl-no}
    Every orientation of the Clebsch graph admits an alternating odd cycle. 
\end{prop}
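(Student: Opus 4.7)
The plan is to argue by contradiction. Suppose an orientation $\overrightarrow{G}$ of the Clebsch graph $G$ satisfies $\psi_d(\overrightarrow{G}) = 2$. By Proposition 14 of \cite{STdir} and Proposition~\ref{prop:shiftuni}, this yields a proper coloring $c : V(G) \to [m]$ and a function $\sigma : V(G) \to [m]$ with $\sigma(v) \ne c(v)$ such that every edge $\{u, v\}$ satisfies $\sigma(u) = c(v)$ or $\sigma(v) = c(u)$. Since $\chi(G) = 4$, we must have $m \ge 4$.

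First I would settle the case $m = 4$. Realizing $G$ as the Cayley graph on $\mathbb{F}_2^4$ with connecting set $\{e_1, e_2, e_3, e_4, e_1 + e_2 + e_3 + e_4\}$, every proper $4$-coloring of $G$ is the coset partition of some $2$-dimensional subspace avoiding the connecting set. In every such coloring, the four color classes $V_1, V_2, V_3, V_4$ have size $4$ and fall into two \emph{dense pairs} (each inducing $K_{4,4}$ minus a perfect matching, with $12$ edges) and four \emph{light pairs} (each a perfect matching, with $4$ edges); the dense pairs form a perfect matching on $\{V_1, V_2, V_3, V_4\}$, and each vertex has $3$ neighbors in its dense-partner class and $1$ neighbor in each light-partner class. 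A case analysis on $\sigma$ then follows: either $\sigma(v)$ equals $v$'s dense-partner color (in which case the two light-partner neighbors are forced to satisfy $\sigma = c(v)$) or $\sigma(v)$ is a light-partner color (forcing the three dense-partner neighbors and the other light-partner neighbor of $v$ to satisfy $\sigma = c(v)$). The non-edge (matching) structure of $K_{4,4} \setminus M$ restricts each color class to contain at most one ``exceptional'' vertex in the second sense, and tracing the implied $\sigma$-values through the perfect-matching edges joining the two dense pairs produces an edge on which some vertex is simultaneously forced to have $\sigma$ equal to two distinct colors, a contradiction.

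The main obstacle is the case $m > 4$. Here my plan is to use the counting identity
$$\sum_{v \in V(G)} |N(v) \cap V_{\sigma(v)}| \ge |E(G)| = 40,$$
together with $\alpha(G) = 5$, to constrain the possible class-size distributions of $c$: the bound forces several color classes to contain at least three vertices and to absorb most of the $\sigma$-outneighbors. Exploiting the vertex-transitivity of $G$ and the fact that every vertex's neighborhood is itself a maximum independent set (so any color class containing such a neighborhood must coincide with it), I would show by case analysis on the ``large'' classes that any such configuration either yields an inconsistent $\sigma$-assignment on some dense bipartite subgraph (as in the $m = 4$ case) or admits a merging of two color classes whose union remains independent, producing a valid pair with fewer colors; iterating this reduction eventually returns to the $m = 4$ case, which has already been excluded.
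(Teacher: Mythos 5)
The paper omits the proof of Proposition~\ref{prop:cl-no} entirely, citing an extensive case analysis, so there is no reference argument to compare against; I can only assess your plan on its own terms, and it has a genuine gap.

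The pivotal claim in your $m=4$ case --- that \emph{every} proper $4$-coloring of the Clebsch graph is a coset partition of a $2$-dimensional subspace avoiding the connection set (hence has all four classes of size $4$) --- is false. Realize $G$ as the Cayley graph on $\mathbb{F}_2^4$ with connection set $\{e_1,e_2,e_3,e_4,e_1+e_2+e_3+e_4\}$ and fix the vertex $0$. Its neighborhood $N(0)$ is an independent set of size $5$, and the ten vertices at distance $2$ from $0$ (the weight-$2$ and weight-$3$ vectors) induce the Petersen graph, while $0$ itself is nonadjacent to all of them. Since Petersen is $3$-chromatic with independence number $4$, we may $3$-color $\{0\}\cup\text{Petersen}$ and use $N(0)$ as the fourth class, obtaining proper $4$-colorings of the Clebsch graph with class sizes $(5,5,3,3)$ or $(5,4,4,3)$. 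These are not coset colorings, so your dense-pair/light-pair structure (and the $\sigma$-case analysis built on it) does not apply to them; the $m=4$ case is therefore not settled. It is not enough to handle coset colorings unless you also argue that a $\psi_d=2$ witness forces a balanced coloring, which you do not do.

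The $m>4$ reduction is also not airtight as stated. ``Merging two color classes whose union remains independent'' does not automatically preserve the edge constraint $\sigma(u)=c(v)$ or $\sigma(v)=c(u)$: if some vertex $v$ has $c(v)=i$ and $\sigma(v)=j$, then after merging classes $i$ and $j$ you would have $\sigma'(v)=c'(v)$, which is only legitimate if $v$ has no out-neighbors under the corresponding orientation. You would need to show that two mergeable classes can always be chosen so that no vertex has $\{c(v),\sigma(v)\}=\{i,j\}$, and that such a merge strictly decreases $m$ while keeping the pair valid; neither is argued. Combined with the false structural claim for $m=4$, the proposal as written does not establish the proposition.
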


Note that this implies that the Clebsch graph does not admit a homomorphism to any symmetric shift graph $S_m$.
The proof of Proposition \ref{prop:cl-no} is not included here, since it relies primarily on an extensive case analysis.

\bigskip
\par\noindent
Another well-known class of topologically $t$-chromatic graphs, whose members have chromatic number $t$ and are also included in the already mentioned list of such graphs in \cite{STcolful}, is the family of rational (or circular) complete graphs $K_{p/q}$ where $t=\left\lceil{p/q}\right\rceil$ odd.
While this family does not include any $4$-chromatic graphs (since $4$ is even), it is interesting to see that the $4$-chromatic rational complete graphs (that is, those with $\left\lceil{p/q}\right\rceil=4$) admit no orientation without alternating odd cycles.

\begin{defi}\label{defi:Kpq}
The rational complete graph $K_{p/q}$ is defined for $p\ge 2q$ on the vertex set
$$V(K_{p/q})=\{0,1,\dots,p-1\}$$ with edges set
$$E(K_{p/q})=\{\{a,b\}: q\le |a-b|\le p-q\}.$$
\end{defi}

For the relevance of these graphs concerning the circular chromatic number, we refer the reader to Subchapter 6.1 of Hell and Ne\v{s}et\v{r}il's book \cite{HN}. One result proven in that section, which we will use, is the following.

\begin{thm}\label{thm:HNbol}{\rm (see as Theorem 6.3 in \cite{HN})}
The homomorphism $K_{p/q}\to K_{p'/q'}$ exists if and only if $p/q\le p'/q'.$
(Note that $p/q\ge 2$ and $p'/q'\ge 2$ are assumed.)
\end{thm}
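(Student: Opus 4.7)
The plan is to prove the two implications separately. For sufficiency, I assume $p/q \le p'/q'$ (equivalently $qp'/p \ge q'$) and define $f : \{0, 1, \ldots, p-1\} \to \{0, 1, \ldots, p'-1\}$ by $f(a) = \lfloor ap'/p \rfloor$. To verify that $f$ is a homomorphism from $K_{p/q}$ to $K_{p'/q'}$, I would take any edge $\{a, b\}$ with $0 \le a < b \le p - 1$, so that $q \le b - a \le p - q$. Since $\lfloor bp'/p \rfloor - \lfloor ap'/p \rfloor$ is an integer lying in the open interval $\bigl((b-a)p'/p - 1,\ (b-a)p'/p + 1\bigr)$, and since $(b-a)p'/p$ itself lies in $[q',\ p' - q']$ (using $qp'/p \ge q'$ on the lower side and $(p-q)p'/p \le p' - q'$ on the upper side), one concludes $q' \le f(b) - f(a) \le p' - q'$, so $\{f(a), f(b)\}$ is an edge of $K_{p'/q'}$.

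For necessity, my approach is via the fractional chromatic number $\chi_f$, which I find cleaner than the more common winding-number approach through circular chromatic number. Two well-known facts suffice. First, $\chi_f$ is monotone under homomorphisms: if $g : G \to H$ and $\{(A_i, \lambda_i)\}$ is a fractional coloring of $H$, then $\{(g^{-1}(A_i), \lambda_i)\}$ is a fractional coloring of $G$ of the same total weight, because preimages of independent sets under a graph homomorphism are independent and the covering condition is preserved; hence $\chi_f(G) \le \chi_f(H)$. Second, $\chi_f(K_{p/q}) = p/q$. This follows from the vertex-transitivity of $K_{p/q}$ under the cyclic shift action of $\mathbb{Z}_p$ together with the identity $\chi_f = |V|/\alpha$ for vertex-transitive graphs; here $\alpha(K_{p/q}) = q$, because any independent set has all pairwise circular distances strictly less than $q$ and so fits inside an arc of at most $q$ consecutive integers, while $\{0, 1, \ldots, q - 1\}$ attains this bound. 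Combining these two observations, $K_{p/q} \to K_{p'/q'}$ immediately yields $p/q = \chi_f(K_{p/q}) \le \chi_f(K_{p'/q'}) = p'/q'$.

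I expect no substantive obstacle along the way. The sufficiency direction amounts to routine bookkeeping with the floor function, and the necessity direction becomes essentially immediate once the right invariant is identified (namely $\chi_f$ rather than $\chi_c$, so that vertex-transitivity pins down its value without further combinatorial work). The only delicate point worth emphasizing is that the single map $f(a) = \lfloor ap'/p \rfloor$ must simultaneously respect both sides of the circular edge condition ($\ge q$ and $\le p - q$); this happens automatically because $f$ is nearly linear, but it is the one place where the computation could have gone wrong had the map been chosen less carefully.
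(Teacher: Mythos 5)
The paper does not prove Theorem~\ref{thm:HNbol}; it only cites it (as Theorem~6.3 of \cite{HN}), so there is no in-paper proof to compare against. Your argument is correct and is a legitimate route to the result. The sufficiency direction via $f(a)=\lfloor ap'/p\rfloor$ is essentially the standard construction from \cite{HN}. For necessity, the textbook route goes through the circular chromatic number $\chi_c$, showing $\chi_c(K_{p/q})=p/q$ and that $\chi_c$ is monotone under homomorphisms; you replace $\chi_c$ with the fractional chromatic number $\chi_f$, which by vertex-transitivity of $K_{p/q}$ under the $\mathbb{Z}_p$-action equals $p/\alpha(K_{p/q})=p/q$. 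That is a genuinely cleaner alternative: $\chi_f$-monotonicity under homomorphisms is completely elementary, and vertex-transitivity pins down the value without the interval or winding-number analysis usually needed for $\chi_c$.

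One small inaccuracy worth flagging: the one-line justification you give for $\alpha(K_{p/q})=q$ --- that an independent set ``fits inside an arc of at most $q$ consecutive integers'' --- is not literally true at the boundary $p=2q$. In $K_{10/5}$ (a perfect matching on $10$ vertices) the set $\{0,2,3,4,6\}$ is independent of size $q=5$ but lies in no arc of $5$ consecutive vertices. The equality $\alpha(K_{p/q})=q$ is nevertheless correct and well known (and proved in \cite{HN}), so the issue is only with the brief justification, not with the conclusion; either cite the fact or replace the arc claim with a more careful cyclic gap-counting argument.
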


\begin{thm}\label{thm:Kpq}
The graph $K_{p/q}$ admits an orientation without alternating odd cycles if and only if $p/q\le 3$.
\end{thm}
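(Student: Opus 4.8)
The plan is to split along the dividing line $p/q=3$, using the equivalence (established earlier via Propositions~\ref{prop:shiftuni} and the $\psi_d\le 2$ $\Leftrightarrow$ no-alternating-odd-cycle characterization) that $K_{p/q}$ admits an orientation without alternating odd cycles if and only if $K_{p/q}\to S_m$ for some $m$, together with the monotonicity supplied by Theorem~\ref{thm:HNbol}.

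First, the ``if'' direction. If $p/q\le 3$ then $p/q\le 3 = 3/1$, so by Theorem~\ref{thm:HNbol} there is a homomorphism $K_{p/q}\to K_{3/1}=K_3$. Since $K_3$ is $3$-colorable, Proposition~\ref{prop:3chrom} (or equivalently composing with $K_3\to S_3$, viewing the cyclically oriented triangle as a copy inside the shift graph) gives that $K_{p/q}$ admits an orientation avoiding alternating odd cycles. (One should note that when $p/q<3$ one could even be more economical, but $p/q\le 3$ suffices uniformly.)

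Next, the ``only if'' direction, which is the crux. Suppose $p/q>3$; I want to show every orientation of $K_{p/q}$ contains an alternating odd cycle, equivalently $K_{p/q}\nrightarrow S_m$ for all $m$. The natural first move is to reduce to a single small witness: since $p/q>3$ means $p/q\ge (3q+1)/q$ is bounded below by a fraction strictly exceeding $3$, and since among all $K_{p/q}$ with $p/q>3$ there is a minimal one in the homomorphism order approaching $3$ from above — concretely $K_{(3q+1)/q}$ for large $q$, or simply any fixed $K_{p/q}$ with $3<p/q<4$ such as $K_{7/2}$ — it would be cleanest to exhibit one graph $K_{p_0/q_0}$ with $3<p_0/q_0\le p/q$ for which the claim is verified directly, and then invoke Theorem~\ref{thm:HNbol} to get $K_{p_0/q_0}\to K_{p/q}$; composing, if $K_{p/q}\to S_m$ then $K_{p_0/q_0}\to S_m$, a contradiction. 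So it suffices to prove the statement for arbitrarily-close-to-$3$ rational complete graphs, e.g. to show $K_{(3q+1)/q}\nrightarrow S_m$ for every $q$ and $m$. Here one uses that $K_{(3q+1)/q}$ is topologically $4$-chromatic (it is in the list from \cite{STcolful} with $\lceil p/q\rceil = 4$) but, more to the point, that it is $4$-chromatic and richly connected: the argument should parallel the $\KG(6,2)$ and Mycielski arguments — count that to realize $\psi_d=2$ one needs every out-neighborhood monochromatic, bound out-degrees by the independence number of neighborhoods, and derive a parity or counting contradiction from the edge count, or alternatively locate a small rigid subgraph (such as several triangles forced to be cyclically oriented around a common structure, as in the wheel/Mycielski arguments) whose constraints are unsatisfiable on an odd cycle.

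I expect the main obstacle to be precisely this last direct verification: unlike the wheels $M_{1,r}(K_2)$, the rational complete graphs $K_{p/q}$ for $3<p/q<4$ do not contain a universal vertex, so one cannot simply point to $2r+1$ triangles through a single vertex. The cleanest route is probably to pick the smallest convenient representative — I would try $K_{7/2}$, which has $7$ vertices, is $4$-chromatic, and has vertex neighborhoods that are themselves small — verify by a short structural/counting argument (every vertex has degree $3$ in $K_{7/2}$, neighborhoods induce a path or matching, so out-degrees are capped, forcing a rigid near-regular orientation, and then a parity obstruction analogous to ``$K_6$ is not the union of $5$ triangles'' rules it out) that $K_{7/2}\nrightarrow S_m$, and then use $7/2\le p/q$ whenever $p/q>3$ is not already $\ge 7/2$ — taking instead an even closer-to-$3$ witness $K_{(3q+1)/q}$ when $3<p/q<7/2$. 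Handling that whole range $(3,7/2)$ uniformly, rather than graph-by-graph, is the delicate point, and I would resolve it by proving the obstruction for the generic family $K_{(3q+1)/q}$ directly with the out-degree/counting method rather than relying on a single example.
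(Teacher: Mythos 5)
Your ``if'' direction is essentially the paper's: $p/q\le 3$ implies $\chi(K_{p/q})\le 3$, whence Proposition~\ref{prop:3chrom} (or composing a homomorphism $K_{p/q}\to K_3$ with a cyclic orientation of $K_3$) gives the desired orientation. Your reduction in the ``only if'' direction to a single witness $K_{(3q+1)/q}$ via Theorem~\ref{thm:HNbol} is also the right move and matches the paper, though your detour through $K_{7/2}$ and the worry about ``handling the range $(3,7/2)$ uniformly'' is unnecessary: for the given $p/q>3$ one simply has $p\ge 3q+1$, so $K_{(3q+1)/q}\to K_{p/q}$ immediately, with the same $q$.

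The genuine gap is that you never actually prove $K_{(3q+1)/q}\nrightarrow S_m$. You only speculate that some out-degree counting argument ``analogous to $\KG(6,2)$'' should work, and acknowledge that this is ``the delicate point.'' That is the entire content of the theorem's hard direction, and it is not supplied. The paper's actual argument is structurally quite different from what you sketch: it does not bound out-degrees or count edges. Instead, it observes that any two incident edges of length $q$ close up into a triangle with an edge of length $q+1$; each such triangle must be oriented cyclically, and since every length-$q$ edge lies in exactly two such triangles they are chained together, forcing all of them to be consistently oriented (say, all clockwise). With that forced structure in hand, the paper exhibits an explicit $(2q+1)$-cycle alternating between $+q$ and $-(q+1)$ steps, namely $q,2q,q-1,2q-1,\dots,0,q$, and checks that under the forced orientation it is an alternating odd cycle. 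Nothing like this rigidity-plus-explicit-witness argument appears in your proposal. A minor further slip: you assert $K_{(3q+1)/q}$ is on the topologically $t$-chromatic list of \cite{STcolful}, but that list includes $K_{p/q}$ only for $\lceil p/q\rceil$ \emph{odd}, as the paper itself points out; the topological machinery is in any case unhelpful at $t=4$, since Theorem~\ref{thm:tnegyed} then only gives the trivial bound $\psi_{d,\min}\ge 2$.
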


\proofka
The fact that $K_{p/q}$ admits an orientation without alternating odd cycles when $p/q\le 3$ follows from Proposition~\ref{prop:3chrom} and the well-known identity 
$\chi(K_{p/q})=\lceil p/q\rceil$, which also follows from Theorem~\ref{thm:HNbol}.

For the reverse statement, by Theorem~\ref{thm:HNbol} it suffices to consider the graph $K_{(3q+1)/q}$. 
Indeed, if $3q<p$, then $p\ge 3q+1$, implying that $p/q\ge (3q+1)/q$, and hence there exists a homomorphism $K_{(3q+1)/q}\to K_{p/q}$.
Therefore, if $K_{p/q}$ admitted a homomorphism to some $S_m$, the same would hold for $K_{(3q+1)/q}$.

Now we show indirectly that $K_{(3q+1)/q}$ does not admit the required orientation.
We imagine the vertices of $K_{(3q+1)/q}$ arranged in order around a circle, and refer to an edge $\{a,b\}$  as having length $j$ if the shorter circular distance between $a$ and $b$ is $j$. 
In the argument that follows, we consider only edges of lengths $q$ and $q+1$.
Observe that any two incident edges of length $q$ form a triangle with a third edge of length $q+1$, connecting their non-shared endpoints. 
To avoid alternating odd cycles, each such triangle must be oriented cyclically.
Moreover, since every edge of length $q$ appears in exactly two such triangles, these triangles form a larger cycle-like structure, in which every triangle shares an edge of length $q$ with the next one.
Therefore, all triangles must be consistently oriented; w.l.o.g., we assume they are all oriented clockwise. 
Now, consider the cycle formed by the sequence $q, 2q, q-1, 2q-1, q-2, 2q-2, ..., 0, q$, which alternates between moving $q$ steps clockwise and $q+1$ steps counterclockwise around the circle.
This cycle has length $2q+1$, and is an alternating odd cycle with only the vertex $q$ having outdegree $1$, contradicting our assumption.
\hfill$\Box$


\section{Making all shortest odd cycles alternating}\label{sect:altall}

In the previous section, we investigated whether a graph can be oriented so as to avoid alternating odd cycles entirely.
Here we consider a sort of complementary problem: whether a graph can be oriented so that at least all of its shortest odd cycles are alternating.
The odd girth (the length of the shortest odd cycle) of a graph $G$ will be denoted by $og(G)$.
When we have $og(G)=3$, then the answer is obviously affirmative: an acyclic orientation suffices.
This is because the $3$-cycle has only two non-isomorphic orientations: the cyclic and the alternating. 
Since this no longer holds for longer odd cycles, for those the question becomes more interesting.
First, we prove the following general statement, which will be useful when considering different graph classes.

\begin{lemma} \label{lem:bip_mat}
If the edge set of a graph G with $og(G)=5$ can be partitioned into a bipartite graph and a matching, then it admits an orientation in which all of its $5$-cycles are alternating.
\end{lemma}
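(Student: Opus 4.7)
The plan is to base the orientation directly on the given partition $E(G)=B\cup M$. Fix a bipartition $(X,Y)$ of $V(G)$ with respect to which $B$ is bipartite, placing $B$-isolated vertices on either side arbitrarily. I will orient every edge of $G$ whose endpoints lie on opposite sides of $(X,Y)$ — whether it belongs to $B$ or to $M$ — from $X$ to $Y$, and orient every matching edge whose endpoints lie on the same side of the bipartition arbitrarily.

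The first step is a parity count. Along any cycle, the number of edges crossing $(X,Y)$ must be even. Every $B$-edge crosses, while a matching edge crosses iff its endpoints lie in different classes. So for any $5$-cycle $C$, writing $m:=|E(C)\cap M|$, we have $m\le 2$ (since $M$ is a matching and $C$ has only $5$ vertices), and the parity condition yields: $m=0$ is impossible (otherwise $C$ would be an odd cycle in the bipartite graph $B$); for $m=1$ the unique matching edge of $C$ must be non-crossing; for $m=2$ exactly one of the two matching edges of $C$ must be crossing.

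The second step is a direct check of the in/out-degree pattern along $C$. For $m=1$, the four $B$-edges form a $4$-path in $B$ between two same-colored endpoints $u,v$; under the rule $X\to Y$ the three interior vertices of this path form a source--sink--source or sink--source--sink triple, and both $u$ and $v$ end up with the same degree imbalance $(\mathrm{in},\mathrm{out})=(0,1)$ or $(1,0)$ in $C$. Orienting the matching edge $\{u,v\}$ either way then produces exactly one pivot, so $C$ is alternating. For $m=2$, let $e$ and $f$ be the crossing and the non-crossing matching edge of $C$, respectively; a short case analysis over the possible colorings of the five vertices of $C$ shows that after orienting the three $B$-edges and $e$ by the rule $X\to Y$, three of the five vertices of $C$ become sources or sinks (none a pivot), while the two endpoints of $f$ share the same imbalance $(0,1)$ or $(1,0)$. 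Orienting $f$ either way then supplies precisely one pivot, and $C$ is alternating.

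The one conceptual obstacle is that a matching edge will in general lie in many $5$-cycles, and a single orientation choice must make all of them alternating simultaneously. This is resolved by the observation that whether a matching edge is crossing or non-crossing depends only on the fixed bipartition $(X,Y)$, not on the cycle: the crossing matching edges all receive the uniform orientation $X\to Y$, while the non-crossing matching edges are genuinely free, since in every $5$-cycle containing them they contribute one pivot irrespective of direction. Thus all constraints fit into a single global orientation of $G$.
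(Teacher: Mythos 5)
Your proof is correct, and it reaches the same conclusion by a route that differs from the paper's in one interesting way. The paper begins by \emph{re-partitioning}: it replaces the given bipartite part by a maximal one, which forces every remaining matching edge to have both endpoints in the same side. As a consequence, a parity count shows that every $5$-cycle contains exactly one matching edge, and that edge is automatically non-crossing; a single case then finishes the argument by orienting all bipartite edges from one side to the other and checking that the two vertices on the minority side of the cycle are both sinks (or both sources). You instead keep the given partition $(B,M)$ as is, orient \emph{all} crossing edges (whether from $B$ or $M$) from $X$ to $Y$, and carry out the parity analysis without assuming maximality; this leaves both $m=1$ and $m=2$ possible, and you verify the alternating pattern in each. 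The trade-off is clear: the paper pays for the simplification up front by a maximality argument (moving any crossing matching edge into $B$, which preserves both the bipartite property and the matching property, terminates, and leaves the original bipartition intact), while you pay for it at the end with the extra $m=2$ case. Both are sound; the paper's version is a bit cleaner to read, while yours has the mild advantage of applying verbatim to whatever partition is given. One stylistic note: in the $m=2$ case you could shorten the case analysis by observing, as the paper does, that once the vertex colors along the cycle are determined, the $C_5$ has exactly two vertices on the minority side and these are both forced to be sinks or both sources, which already pins down the alternating structure regardless of how the free matching edge is oriented.
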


Notice that every graph satisfying the conditions of Lemma \ref{lem:bip_mat} must be $4$-colorable as it is the union of two bipartite graphs.

\begin{proof}
    If we choose the bipartite subgraph to be maximal, then the remaining matching edges must connect vertices on the same side of the bipartite graph.
    Consequently, every odd cycle must contain an odd number of these matching edges. 
    In particular, a $C_5$ cannot contain three such edges, so it must contain exactly one.
    Now fix such a partition: a bipartite graph with vertex classes $A$ and $B$, together with a matching $M$ whose edges each lie entirely within $A$ or within $B$.
    Orient all bipartite edges from $A$ to $B$, and orient the edges of $M$ arbitrarily.
    Any $5$-cycle in $G$ contains exactly two vertices from $A$ or exactly two from $B$, and these two vertices act either both as sources or both as sinks in the directed cycle.
    Hence every $5$-cycle of $G$ contains at least two sources or at least two sinks, and therefore every $5$-cycle is alternating under this orientation.
\end{proof}

\begin{remark}
\label{rem:bip-mat}
    The statement of Lemma \ref{lem:bip_mat} can be extended to graphs with larger odd girth as follows. 
    If $G$ contains a bipartite subgraph such that every shortest odd cycle in $G$ uses at most one edge outside this subgraph, then $G$ admits an orientation in which every shortest odd cycle is alternating.
\end{remark}

\subsection{The Clebsch graph}

The Clebsch graph is well known for its role in establishing the lower bound for the Ramsey number $R(3,3,3)=17$.
Indeed, Greenwood and Gleason showed that $E(K_{16})$ can be $3$-colored without creating a monochromatic triangle, by partitioning the edges into three copies of the Clebsch graph.

As an application of Lemma \ref{lem:bip_mat}, we can show an orientation of the Clebsch graph containing only alternating $5$-cycles.

\begin{prop}
    The Clebsch graph has odd girth $5$, and it can be oriented so that all copies of $C_5$ become alternating.
\end{prop}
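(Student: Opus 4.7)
The plan is to invoke Lemma~\ref{lem:bip_mat}: the task reduces to showing that the Clebsch graph has odd girth exactly $5$ and to exhibiting a partition of its edge set into a bipartite subgraph and a matching.

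I would use the description of the Clebsch graph as the Cayley graph on $\mathbb{F}_2^4$ with connection set $S=\{e_1,e_2,e_3,e_4,\mathbf{1}\}$, where $e_1,\dots,e_4$ are the standard basis vectors and $\mathbf{1}=e_1+e_2+e_3+e_4$. Triangle-freeness is a short check: if $s_1+s_2+s_3=0$ with $s_i\in S$, then either two of the $s_i$ coincide (forcing the third to be $0\notin S$) or they are all distinct, and in the latter case the sum has Hamming weight $3$ or $2$ depending on whether $\mathbf{1}$ is among them, never $0$. A concrete $5$-cycle is $0,\,e_1,\,e_1{+}e_2,\,e_1{+}e_2{+}e_3,\,\mathbf{1},\,0$, closing with the generator $\mathbf{1}$; this confirms that the odd girth equals $5$.

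For the partition, I would split the edges according to their generating element in $S$. Each $e_i$ has odd Hamming weight, so the edges produced by $e_1,\dots,e_4$ connect vertices of opposite weight parity; together they form a bipartite subgraph whose bipartition is given by the parity of the Hamming weight. The remaining edges, coming from the generator $\mathbf{1}$ (which has even weight $4$), connect each vertex $v$ to $v+\mathbf{1}$, and since $\mathbf{1}\neq 0$ and each vertex has a unique such partner, these edges form a perfect matching whose edges lie entirely within the two parity classes. Lemma~\ref{lem:bip_mat} applied to this decomposition then produces the required orientation.

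The one genuine choice is the representation of the Clebsch graph; the Cayley presentation above is tailored precisely to isolate one generator whose corresponding edges form a matching, and once this description is in hand, no serious combinatorial obstacle remains. The remainder is a parity check together with a direct appeal to the lemma.
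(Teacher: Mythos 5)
Your argument is correct and follows the same strategy as the paper's: exhibit a decomposition of $E$(Clebsch) into a bipartite subgraph plus a matching and then invoke Lemma~\ref{lem:bip_mat}. The difference is purely in the presentation of that decomposition. The paper displays it pictorially (Figure~\ref{fig:Cl}) and cites Greenwood--Gleason for triangle-freeness, whereas you work with the Cayley-graph model on $\mathbb{F}_2^4$ with connection set $\{e_1,\dots,e_4,\mathbf{1}\}$, which makes both the triangle-freeness check and the decomposition (Hamming-weight parity gives the bipartition, $\mathbf{1}$-edges give the matching) transparent and self-contained. Your version also hands you for free the extra hypothesis used in the proof of Lemma~\ref{lem:bip_mat}, namely that the matching edges lie inside the parts, since $\mathbf{1}$ has even weight; the paper obtains this by taking the bipartite part maximal. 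So: same key lemma, same type of decomposition, but a cleaner algebraic realization on your end.
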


\begin{proof}
    By Greenwood and Gleason's \cite{GG} lower bound construction for $R(3,3,3)=17$ it follows that the Clebsch graph contains no triangle.
    It is easy to see on the embedding shown in Figure \ref{fig:Cl}, that it contains $C_5$
    Thus its odd girth is indeed $5$.
    
    For the second statement, by Lemma \ref{lem:bip_mat} it suffices to show that the edge set of the Clebsch graph can be partitioned into a bipartite graph on parts $A$ and $B$, and a matching.   
    Such a decomposition is illustrated in Figure~\ref{fig:Cl}: the dashed edges correspond to the matching, while the solid edges form the bipartite subgraph.
    Vertices of $A$ are drawn as dots, and those of $B$ as squares.
    Note that the figure also displays an orientation, constructed using Lemma~\ref{lem:bip_mat}, that guarantees every $5$-cycle in the Clebsch graph is alternating.
\end{proof}

\begin{figure} [h!]
    \centering
    \includegraphics[width = 12 cm]{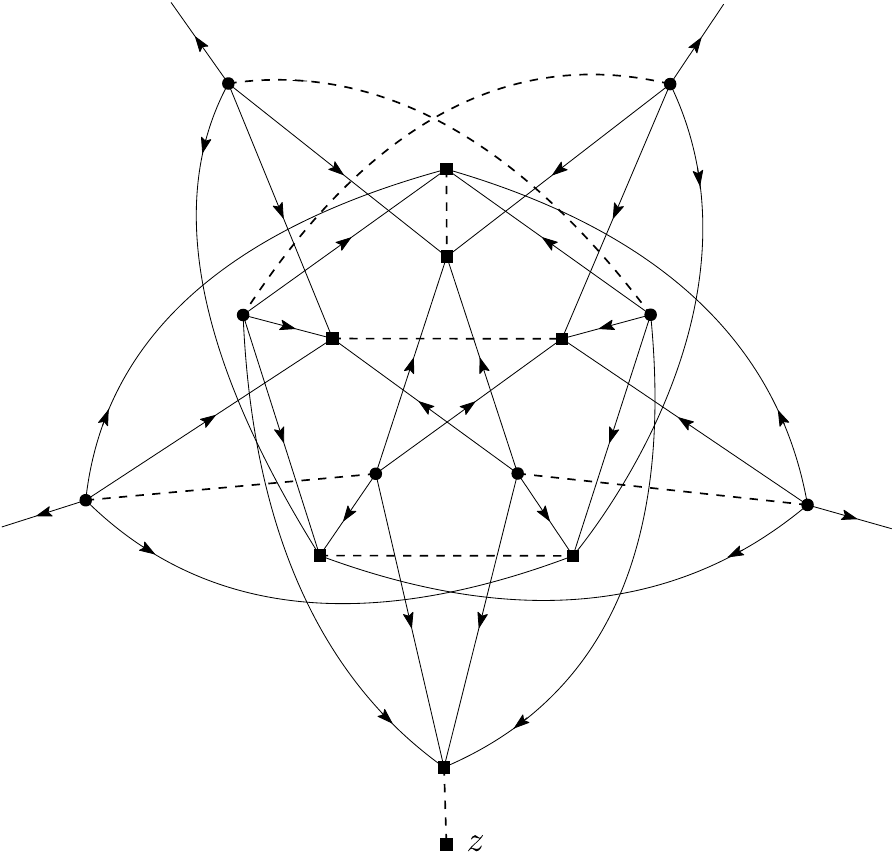}
    \caption{An orientation of the Clebsch graph with all $5$-cycles alternating. The dashed edges may be oriented arbitrarily. The external vertex $z$ is also connected to the other four edges that are drawn without endpoints.}
    \label{fig:Cl}
\end{figure}

\begin{cor}
    The Grötzsch graph, i.e., the Mycielskian $M_2(C_5)$, appears as a subgraph of the Clebsch graph.
    It is triangle-free, yet, as the Mycielskian of the $5$-cycle, it necessarily contains a $C_5$, so its odd girth is $5$.
    Consequently, the Grötzsch graph can also be oriented so that all of its shortest odd cycles are alternating.
\end{cor}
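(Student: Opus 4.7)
My plan is to derive the corollary directly from the preceding proposition, which equips the Clebsch graph with an orientation in which every $5$-cycle is alternating. Three ingredients suffice: (i) an explicit embedding of the Grötzsch graph as a subgraph of the Clebsch graph, (ii) the fact that the Grötzsch graph has odd girth exactly $5$, and (iii) the observation that whether a cycle is alternating depends only on the orientation of its own edges.

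For (i), I would use a convenient description of the Clebsch graph, for instance as the folded $5$-cube: its $16$ vertices are the even-size subsets of $\{1,\dots,5\}$, with two subsets adjacent iff their symmetric difference has size $4$. I would first locate a $5$-cycle $v_0 v_1 v_2 v_3 v_4$ in this graph (guaranteed to exist by the preceding proposition), then search for five further vertices $v_0',\dots,v_4'$ such that $v_i'$ is adjacent to both neighbors of $v_i$ on the $5$-cycle, together with an eleventh vertex $z$ adjacent to all five of the $v_i'$. The search is finite and heavily constrained by the $5$-regularity and the vertex-transitivity of the Clebsch graph, and this embedding is classical; it could also be extracted visually from a drawing such as Figure~\ref{fig:Cl}.

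For (ii), triangle-freeness of $M(C_5)$ is the standard fact that the Mycielski construction preserves triangle-freeness, and the original $5$-cycle is preserved as a subgraph, so $og(M(C_5)) = 5$.

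For (iii), I would take the orientation of the Clebsch graph produced by the preceding proposition and restrict it to the edges of the embedded Grötzsch subgraph. Every $5$-cycle of the Grötzsch graph is a fortiori a $5$-cycle of the Clebsch graph, hence alternating under the restricted orientation. By (ii), these $5$-cycles are exactly the shortest odd cycles of the Grötzsch graph, so every shortest odd cycle of $M(C_5)$ is alternating, as claimed. The only real obstacle is step (i): producing a concrete, verifiable embedding of $M(C_5)$ inside the Clebsch graph; steps (ii) and (iii) are then essentially immediate.
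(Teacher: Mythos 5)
Your proposal is correct and matches the paper's argument: the corollary follows from the Clebsch-graph orientation by restriction, using precisely that $M(C_5)$ embeds into the Clebsch graph, that $M(C_5)$ is triangle-free but contains $C_5$ so its odd girth is $5$, and that the alternating property of a cycle depends only on the orientation of its own edges. The paper likewise treats the embedding of the Grötzsch graph into the Clebsch graph as a known fact rather than exhibiting it explicitly, so your "only real obstacle" is not an obstacle for the argument as intended.
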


\subsection{Kneser graphs}

In this subsection we answer the question considered in this section for a family of Kneser graphs.
Note that the odd girth of Kneser graphs is known \cite{PT}, and in fact, it follows from a similar argument that the same value holds for Schrijver graphs as well:
\[
    og(\KG(n,k)) = og(\SG(n,k)) = 2\Big\lceil \frac{k}{n-2k} \Big\rceil + 1.
\]
The following result about Kneser graphs, of course, implies the analogous statement also for Schrijver graphs. 

\begin{thm}\label{thm:KGspec}
For any positive integers $m$ and $k$ the Kneser graph $\KG(m(2k+1),mk)$ admits an orientation in which all its shortest odd cycles become alternating.
\end{thm}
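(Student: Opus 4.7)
The plan is to exhibit a bipartite subgraph of $\KG(m(2k+1),mk)$ that every shortest odd cycle crosses almost entirely, and then to invoke Remark~\ref{rem:bip-mat}. Fix any reference element $v_0 \in [m(2k+1)]$ and partition the vertex set into
$$L \;=\; \{A : v_0 \in A\} \qquad \text{and} \qquad R \;=\; \{A : v_0 \notin A\}.$$
Since two adjacent Kneser vertices are disjoint $mk$-subsets, they cannot both contain $v_0$, so $L$ is an independent set; the bipartite subgraph $H$ between $L$ and $R$ therefore already contains every edge of $\KG(m(2k+1),mk)$ except those whose both endpoints lie in $R$. By Remark~\ref{rem:bip-mat} it will be enough to show that every shortest odd cycle contains at most one such ``inner'' edge.

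The heart of the argument is a tight pigeonhole count at these parameters. The displayed formula above gives $og(\KG(m(2k+1),mk)) = 2k+1$. Let $A_1,\dots,A_{2k+1}$ be any shortest odd cycle and, for $v \in [m(2k+1)]$, set
$$I(v) \;=\; \{\, i \in \mathbb{Z}/(2k+1) : v \in A_i \,\}.$$
Consecutive disjointness $A_i \cap A_{i+1} = \emptyset$ forces $I(v)$ to be an independent set in the cycle $C_{2k+1}$, so $|I(v)| \le k$. On the other hand,
$$\sum_{v} |I(v)| \;=\; \sum_{i} |A_i| \;=\; (2k+1)\cdot mk \;=\; m(2k+1)\cdot k,$$
so the average of $|I(v)|$ over the $m(2k+1)$ ground-set elements is already $k$. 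Equality must therefore hold for each individual $v$, and each $I(v)$ is a maximum stable $k$-subset of the odd cycle $C_{2k+1}$.

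Applying this to $v = v_0$, I would read off the inner edges of $A_1,\dots,A_{2k+1}$ from the cyclic $0/1$-word $\sigma_i := \mathbf{1}[v_0 \in A_i]$. It has exactly $k$ ones (at the positions in $I(v_0)$), no two cyclically adjacent, together with $k+1$ zeros. The $k$ cyclic gaps between consecutive ones have zero-lengths summing to $k+1$, so exactly one gap has length $2$ while all the others have length $1$. Translated back, the cycle contains exactly one edge whose both endpoints lie in $R$, and all of its other $2k$ edges lie in $H$; Remark~\ref{rem:bip-mat} then delivers the desired orientation.

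The step I expect to need the most care is the pigeonhole that forces $|I(v)| = k$ for \emph{every} $v$. The equality $\sum_v |I(v)| = m(2k+1)\cdot k$ is tight only because the parameters $n = m(2k+1)$ and $k' = mk$ match so precisely; any attempt to extend the statement outside this divisibility regime would introduce slack, the ``unique $00$-gap'' conclusion could fail, and a single reference element would no longer suffice. Once this pigeonhole is in hand, the rest of the proof goes through without case analysis.
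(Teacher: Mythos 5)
Your proof is correct and is essentially the same as the paper's. Both fix a single reference element of the ground set and run the identical tight pigeonhole count to conclude that each element appears in exactly $k$ of the $2k+1$ vertices of any shortest odd cycle; you route the conclusion through the bipartite subgraph $L$--$R$ and Remark~\ref{rem:bip-mat}, while the paper orients directly (making every set containing the reference element a source), but these two formulations produce the very same orientation and hinge on the same observation.
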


\proofka
Note that, using the above formula, we have 
$$\KG(m(2k+1),mk) = 2\Big\lceil\frac{mk}{m(2k+1)-2mk}\Big\rceil+1 = 2k+1. $$ 
Thus, we want an orientation where each copy of $C_{2k+1}$ is alternating.

Fix an arbitrary element $j\in [m(2k+1)]$ and orient the Kneser graph so that every vertex corresponding to an $mk$-element subset that contains $j$ becomes a source (i.e., all incident edges point away from it). 
Edges not incident to such vertices can be oriented arbitrarily. 
We prove that all cycles $C_{2k+1}\subseteq\KG(m(2k+1),mk)$ are alternating under this orientation. 
Note that this follows if we show that every $C_{2k+1}$ subgraph contains at least $k$ source vertices. 
Consider a fixed $(2k+1)$-cycle $C$ in the Kneser graph, and look at the set of pairs $(i,A)$, where $i\in A$ and $A\in V(C)\subseteq V(\KG(m(2k+1),mk))={[m(2k+1)]\choose mk}$. 
There are $2k+1$ such sets $A$, each corresponding to a vertex of the fixed cycle $C$ and containing $mk$ elements.
So the number of $(i,A)$ pairs in our set is $(2k+1)mk$. 
On the other hand, since vertices representing intersecting $mk$-subsets are never adjacent, and the independence number of $C$ is $k$, each element $i\in [m(2k+1)]$ can belong to at most $k$ of the $mk$-subsets representing vertices of the cycle.
Therefore, the total number of pairs $(i,A)$ can be at most $m(2k+1)k$, with equality occurring only if each element $i\in [m(2k+1)]$ appears exactly $k$ times. 
Since our earlier count shows that this maximum is indeed reached, the said condition must hold. 
Thus our fixed element $j$ must appear in exactly $k$ vertices of every $C_{2k+1}$ subgraph, ensuring there are $k$ vertices with indegree zero. 
Hence all such subgraphs are indeed alternating odd cycles.
\hfill$\Box$

\subsection{Schrijver graphs}

In this subsection, we show a family of Schrijver graphs that can be oriented with all shortest odd cycles alternating.

\begin{thm}
    \label{thm:sg-all}
    For all $t\leq 4$, every $t$-chromatic Schrijver graph admits an orientation in which all of its shortest odd cycles are alternating.
\end{thm}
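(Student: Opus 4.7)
My plan is to argue by casework on $t \in \{2, 3, 4\}$. For $t = 2$ the Schrijver graph $\SG(2k, k)$ consists only of the two alternating stable $k$-subsets $\{1,3,\dots,2k-1\}$ and $\{2,4,\dots,2k\}$, so it is a single edge without odd cycles and the statement is vacuous.

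For $t = 3$ I would first show $\SG(2k+1, k) \cong C_{2k+1}$: there are $2k+1$ stable $k$-subsets of $[2k+1]$ (by the standard count $\frac{n}{n-k}\binom{n-k}{k}$), and each such $A$ has exactly two disjoint stable $k$-subsets, because the cyclic complement $[2k+1]\setminus A$ contains $k+1$ elements split into $k$ gaps of sizes $2, 1, \dots, 1$, giving two choices for a stable $k$-subset of the complement. This makes $\SG(2k+1, k)$ $2$-regular, and tracing the edges from any starting vertex shows it is connected, so it equals $C_{2k+1}$. I then orient this single odd cycle alternately by designating one vertex as ``mixed'' and alternating source and sink around the cycle.

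For $t = 4$ the graph is $\SG(2k+2, k)$ with odd girth $2\lceil k/2\rceil + 1$. When $k \in \{1, 2\}$ the odd girth is $3$, so the shortest odd cycles are triangles and any acyclic orientation (from a topological sort of the vertices) makes every triangle transitively oriented and hence alternating. For $k \geq 3$ the odd girth is at least $5$, and I would apply Lemma~\ref{lem:bip_mat} (when it equals $5$) or its generalization Remark~\ref{rem:bip-mat}. The main step is to find a bipartite subgraph of $\SG(2k+2, k)$ whose complement is used at most once by each shortest odd cycle. My candidate is the bipartition $X = A \cup M^o$, $Y = B \cup M^e$, where $A$ and $B$ are the all-odd and all-even classes of the unique induced $K_{k+1, k+1}$ described in the remark following Theorem~\ref{thm:toS4}, and $M^o$ (respectively $M^e$) consists of the mixed stable $k$-subsets with strictly more odd (respectively even) elements; when $k$ is even, the balanced mixed vertices would be assigned to $X$ or $Y$ by a secondary rule such as the parity of the smallest element.

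The crux is to verify (i) that the edges inside $X$ or inside $Y$ form a matching, and (ii) that no shortest odd cycle uses more than one such edge. For (i) with $k$ odd, two adjacent $M^o$-vertices must have complementary odd parts (otherwise their combined odd parts would exceed the $k+1$ available odd elements of $[2k+2]$), and the stability constraint then pins down the even parts to at most one choice, producing the matching; the case of $M^e$ is symmetric. For (ii) with odd girth $5$, tracing the side alternation around a $5$-cycle shows that two matching edges would force one of the remaining three ``cross'' edges to have both endpoints on the same side of the bipartition, a contradiction. I expect the hardest part to be the even-$k$ case (where the balanced mixed vertices require the secondary tiebreaker to be chosen so that the matching property survives) and the case of odd girth $\geq 7$ (where three matching edges can in principle fit into a $7$-cycle, so Remark~\ref{rem:bip-mat} does not apply automatically); these likely require invoking the finer structural description of $4$-chromatic Schrijver graphs from~\cite{BB1, BB2, STActa} to constrain how shortest odd cycles thread through the matching.
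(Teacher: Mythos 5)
Your outline is sound for $t \le 3$ and for the smallest $4$-chromatic cases, but it has the gap you yourself flag at the end, and that gap is precisely the content of the paper's argument. Concretely: for $t=3$ you identify $\SG(2k+1,k) \cong C_{2k+1}$ and orient it directly, a clean shortcut (the paper instead derives this case from Theorem~\ref{thm:KGspec} with $m=1$), and for $t=4$, $k\in\{1,2\}$ the acyclic orientation for odd girth~$3$ is fine. For $t=4$ with $k\ge 3$, your parity-based bipartition $X = A\cup M^o$, $Y = B\cup M^e$ is a plausible candidate --- for odd $k$ the disjointness-plus-majority count forces $s_o = t_o = (k+1)/2$ for any edge inside $M^o$, and in small cases the resulting same-side edges appear to coincide with the $k+1$ diagonals of the Möbius ladder that the paper uses --- but two steps are missing. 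First, property~(i) is not actually proved: complementarity of the odd parts does not by itself force the even parts, and the appeal to "the stability constraint pins down the even parts" needs a real argument. Second, and more seriously, property~(ii) is established only when the odd girth is $5$ (i.e.\ $k\in\{3,4\}$); for $k\ge 5$ a shortest odd cycle could a priori meet the matching three or more times, so Remark~\ref{rem:bip-mat} cannot be invoked off the shelf, as you yourself observe.

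This is exactly where the paper's proof does the work you defer. For odd $k$ it proves a structural Claim --- using the layered description of $\SG(2k+2,k)$ (Möbius ladder at the bottom, cycles $C_{2k+2}$ in between, $K_{k+1,k+1}$ on top) --- that \emph{every} cycle of length $k+2$ uses exactly one diagonal of the Möbius ladder, which is the strengthening of your~(ii) needed for Remark~\ref{rem:bip-mat}. For even $k$ the paper bypasses the bipartition entirely: since $\SG(2k+2,k) \subset \KG(2k+2,k)$ have the same odd girth, it applies Theorem~\ref{thm:KGspec} with $m=2$ and restricts that orientation (every set containing a fixed element is a source) to the Schrijver subgraph, which sidesteps both your tiebreaker worry for balanced mixed vertices and the odd-girth-$\ge 7$ worry at once. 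So the route you chose can likely be completed, but as written it is missing the key Claim; you would either need to carry out the structural analysis you defer to \cite{BB1,BB2,STActa}, or adopt the paper's fallback via Theorem~\ref{thm:KGspec} for even $k$.
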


\begin{proof}
The claim is immediate for $t \leq 2$.
The case $t=3$ follows directly from Theorem \ref{thm:KGspec}.
Indeed, when $m=1$, the theorem states that every $3$-chromatic Kneser graph admits an orientation in which all copies of its shortest odd cycle are alternating.
Since every $3$-chromatic Schrijver graph is an induced subgraph of the corresponding Kneser graph having the same odd girth, the same orientation works for the Schrijver graph as well.

It remains to prove the statement for $4$-chromatic Schrijver graphs, i.e., $\SG(2k+2,k)$ for any $k>0$.
We examine the cases of odd and even $k$ separately.
Note that we have $og(\SG(2k+2,k))=k+2$ for odd $k$ and $og(\SG(2k+2,k))=k+1$ for even $k$.
Our proof relies on the well-understood structure of 4-chromatic Schrijver graphs, as detailed in \cite{BB1,BB2,STActa}.

For odd $k$, the graph $\SG(2k+2,k)$ consists of
\begin{itemize}
    \item $\frac{k+1}{2}$ layers, each containing $C_{2k+2}$,
    \item edges connecting corresponding vertices of neighboring layers,
    \item edges connecting opposite vertices in the bottom (or first) layer, completing a Möbius ladder $M_{2k+2}$, and
    \item edges extending the top (or last) layer to a complete bipartite graph.
\end{itemize}
See for example $\SG(12,5)$ in Figure \ref{fig:Sg}.
The odd girth of $\SG(2k+2,k)$ is $k+2$, so our goal is to find an orientation of the Schrijver graph such that each $C_{k+2}$ becomes alternating.

\begin{figure} [h!]
    \centering
    \includegraphics[width = 6 cm]{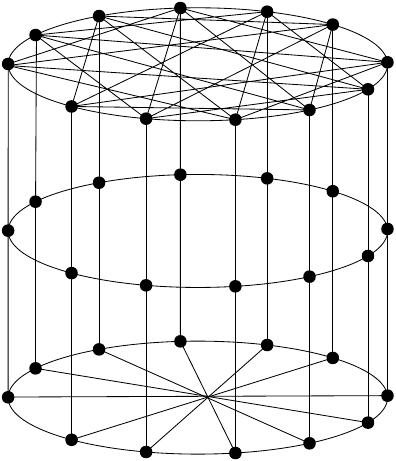}
    \caption{The Schrijver graph $\SG(12,5)$ consists of $3$ layers: a Möbius ladder $M_{12}$ in the bottom, a cycle $C_{12}$ in the middle and a complete bipartite graph $K_{6,6}$ at the top.}
    \label{fig:Sg}
\end{figure}

\medskip\par\noindent
\begin{cl*}
Any cycle of length $k+2$ uses exactly one diagonal edge from the bottom layer.
\end{cl*}
\begin{proof}
Note that any copy of a $C_{k+2}$ contains at least one diagonal edge from the Möbius ladder in the bottom layer.
Indeed, if we remove all $k+1$ diagonal edges, the bottom layer transforms into a cycle $C_{2k+2}$, and thus the remaining structure is bipartite.
This can be easily verified via a greedy $2$-coloring strategy, and thus we see that the remaining graph contains no odd cycles.

To prove the claim, we show that no cycle of length $k+2$ uses at least two diagonal edges from the bottom layer.
First observe that every cycle using at least two diagonal edges from the bottom layer and one from the top layer must have length greater than $k+2$.  
Indeed, such a cycle necessarily contains at least two diagonal edges from the bottom layer, some edge(s) connecting two endpoints of the diagonal edges; $k-1$ vertical edges between the top and bottom layers, and one edge from the top layer.  
This already accounts for $k+3$ edges, so the cycle cannot be of length $k+2$.

Therefore, we can project the cycle down onto the bottom layer, where it forms a closed sequence of edges (possibly using some edges multiple times) with length $k+2$.
Since the sequence is closed and has odd length, it must contain an odd cycle as a subgraph. 
But the odd girth of the Schrijver graph is $k+2$, so this closed sequence must itself be a cycle of length $k+2$. 

Now observe that deleting two opposite edges from the $C_{2k+2}$ part of $M_{2k+2}$ yields a bipartite graph.  
Consequently, every odd cycle in $M_{2k+2}$ must contain at least one edge from each opposite pair of edges of $C_{2k+2}$.  
Therefore, any cycle of length $k+2$ on the bottom layer must use at least $k+1$ edges of the base cycle $C_{2k+2}$, leaving room for at most one diagonal edge.  
Hence, the claim is proven.   
\end{proof}

Now we partition the edges of $\SG(2k+2,2)$ into a matching and a bipartite graph, so that any shortest odd cycle uses exactly one edge of the matching.
Specifically, we take the diagonal edges of $M_{2k+2}$, forming a matching, and the remaining edges form a bipartite graph.
By Remark \ref{rem:bip-mat}, such a partitioning proves the existence of an orientation, where each shortest odd cycle is alternating.

Thus, the statement is proven for odd values of $k$.
We note that the even case could be proved similarly by examining the structure of the Schrijver graphs and their shortest odd cycles.
However, we omit this detailed argument here, as the result also follows from a special case of Theorem \ref{thm:KGspec}.
Specifically, for all $k$, the Schrijver graph $\SG(2k+2,k)$ is a subgraph of the Kneser graph $\KG(2k+2,k)$ with the same odd girth.
Therefore, by setting $m=2$, the statement for even $k$ follows directly from Theorem \ref{thm:KGspec}. 
\end{proof}

\begin{ques}
    Which $t$-chromatic Schrijver graphs with $t\geq 5$ admit an orientation so that all of their shortest odd cycles become alternating?
\end{ques}






\section{Acknowledgement}
The authors thank Gábor Tardos for his invaluable remarks and for his generous support in completing this paper.

\newpage
\section{Questions}

Several questions remain open in this area. Here, we highlight a selection of those that we find most intriguing.

\begin{enumerate}
\item  Does there exist an integer $m$ such that $\KG(8,3) \rightarrow S_m$? More generally, for which integers $k \geq 3$ does there exist an $m$ such that $\KG(2k+2, k) \rightarrow S_m$?

\item Is it true for arbitrary $n$ and $k$, that there exists an orientation of $\KG(n,k)$ in which every shortest odd cycle is alternating? (The proof of Theorem~\ref{thm:KGspec} makes use of the ratio $n/k=(2x+1)/x$.)

\item Which $t$-chromatic Schrijver graphs with $t\geq 5$ admit an orientation so that all of their shortest odd cycles become alternating?

\item Is  $M_r(C_{2k+1})$ orientable in such a way that every shortest odd cycle is alternating? (For $M(C_5)$, we already know the answer is yes, since this appears in the orientation of the Clebsch graph.)

\item The orientability that avoids alternating odd cycles has been characterized by the existence of a homomorphism into some symmetric shift graph $S_m$. What about making all the shortest odd cycles alternating? Is there some nice characterization for this question as well?
\end{enumerate}


\bibliography{references}
\bibliographystyle{plain}

\end{document}